\newtheorem{dummy}{anything}[section]
\newtheorem{theorem}[dummy]{Theorem}
\newtheorem{lemma}[dummy]{Lemma}
\newtheorem{proposition}[dummy]{Proposition}
\theoremstyle{definition}
\newtheorem{definition}[dummy]{Definition}
  \newtheorem{example}[dummy]{Example}
  \newtheorem{remark}[dummy]{Remark}
\newcommand{\bbZ}{\mathbb Z}
\newcommand{\bbQ}{\mathbb Q}
\newcommand{\bbR}{\mathbb R}
\newcommand{\bbF}{\mathbb F}
\newcommand{\cH}{\mathcal H}
\newcommand{\cO}{\mathcal O}
\newcommand{\cE}{\mathcal E}
\newcommand{\cM}{\mathcal M}
\newcommand{\cC}{\mathcal C}
\newcommand{\bC}{\mathbf C}
\newcommand{\bD}{\mathbf D}
\DeclareMathOperator{\Hom}{Hom}
\DeclareMathOperator{\End}{End}
\DeclareMathOperator{\Dim}{Dim}
\DeclareMathOperator{\Map}{Map} 
\DeclareMathOperator{\Or}{Or}
\DeclareMathOperator{\im}{im}
\DeclareMathOperator{\Res}{Res}
\DeclareMathOperator{\Def}{Def}
\DeclareMathOperator{\jn}{Join}
\DeclareMathOperator{\Jnd}{Jnd}
\DeclareMathOperator{\Ten}{Ten}
\DeclareMathOperator{\Aut}{Aut}
\newcommand{\bd}{\partial}
\newcommand{\id}{\mathrm{id}}
\newcommand{\un}{\underline}
\newcommand{\Sub}{\mathrm{Sub}}
\newcommand{\Flag}{\mathrm{Flag}}
\newcommand{\Hn}{\mathfrak{Hom}}
\def\G{\varGamma}
\def\maprt#1{\smash{\,\mathop{\longrightarrow}\limits^{#1}\,}}
\begin{document}

\title{Equivariant Moore spaces and the Dade group}
\author{Erg\" un Yal\c c\i n}

\address{Department of Mathematics, Bilkent
University, Ankara, 06800, Turkey.}

\email{yalcine@fen.bilkent.edu.tr}

\keywords{}

\thanks{2010 {\it Mathematics Subject Classification.} Primary: 57S17; Secondary: 20C20.}

\thanks{Partially supported by T\" UB\. ITAK-B\. IDEB-2219.}

\date{\today}

\begin{abstract} 
Let $G$ be a finite $p$-group and $k$ be a field of characteristic $p$. A topological space $X$ is called an $n$-Moore space if its reduced homology is nonzero only in dimension $n$. We call a $G$-CW-complex $X$ an $\underline{n}$-Moore $G$-space over $k$ if for every subgroup $H$ of $G$, the fixed point set $X^H$ is an $\underline{n}(H)$-Moore space with coefficients in $k$, where $\underline{n}(H)$ is a function of $H$.  We show that if $X$ is a finite $\underline{n}$-Moore $G$-space, then the reduced homology module of $X$ is an endo-permutation $kG$-module generated by relative syzygies. A $kG$-module $M$ is an endo-permutation module if ${\rm End}_k (M) =M \otimes _{k} M^*$ is a permutation $kG$-module. We consider the Grothendieck group of finite Moore $G$-spaces $\mathcal{M}(G)$, with addition given by the join operation, and relate this group to the Dade group generated by relative syzygies.    
\end{abstract}

\maketitle

\section{Introduction and statement of results} 

Let $G$ be a finite group and $M$ be a $\bbZ G$-module. A $G$-CW-complex $X$ is called a \emph{Moore $G$-space} of type $(M, n)$ if the reduced homology group $\widetilde H_i(X ; \bbZ)$ is zero whenever $i\neq n$ and $\widetilde H_n (X; \bbZ) \cong M$ as $\bbZ G$-modules. One of the classical problems in algebraic topology, due to Steenrod, asks whether every $\bbZ G$-module is realizable as the homology module of a Moore $G$-space. G. Carlsson \cite{Carlsson} constructed counterexamples of non-realizable modules for finite groups that include $\bbZ /p \times \bbZ/p$ as a subgroup for some prime $p$.  The question of finding a good algebraic characterization of realizable $\bbZ G$-modules is still an open problem (see \cite{JustinSmith} and \cite{Benson-Habegger}).

In this paper we consider Moore $G$-spaces whose fixed point subspaces are also Moore spaces. Let $R$ be a commutative ring of coefficients and let $\un{n} : \Sub(G) \to \bbZ$ denote a function from subgroups of $G$ to integers, which is constant on the conjugacy classes of subgroups. Such functions are often called super class functions.

\begin{definition}\label{def:main} A $G$-CW-complex $X$ is called an \emph{$\un{n}$-Moore $G$-space over $R$} if for every $H \leq G$, the reduced homology group $\widetilde H_i(X ^H ; R)$ is zero for all $i\neq \un{n}(H)$. 
\end{definition}

When $\un{n}$ is the constant function with value $n$ for all $H\leq G$, the homology at dimension $n$ can be considered as a module over the orbit category $\Or G$. If $\widetilde{\un{H}}_n (X^?; R) \cong \un{M}$ as a module over the orbit category, then $X$ is called a Moore $G$-space of type $(\un{M}, n)$. When $R=\bbQ$ and $X^H$ is simply-connected for all $H \leq G$, the space $X$ is called a rational Moore $G$-space. Rational Moore $G$-spaces are studied extensively in equivariant homotopy theory and many interesting results are obtained on homotopy types of these spaces (see \cite{Kahn} and \cite{Doman}). 

In this paper, we allow $\un{n}$ to be an arbitrary super class function and take the coefficient ring $R$ as a field $k$ of characteristic $p$. We define the group of finite Moore $G$-spaces over $k$ and relate this group to the Dade group, the group of endo-permutation modules. Since the appropriate definition of a Dade group for a finite group is not clear yet, we restrict ourselves to the situation where $G$ is a $p$-group, although the results have obvious consequences for finite groups via restriction to a Sylow $p$-subgroup.  
 
Let $G$ be a finite group and $k$ be a field of characteristic $p$. Throughout we assume all $kG$-modules are finitely generated. A $kG$-module $M$ is called an \emph{endo-permutation $kG$-module} if $\End _{k} (M)=M\otimes_k M^*$ is isomorphic to a permutation $kG$-module when regarded as a $kG$-module with diagonal $G$-action $(gf)(m)=gf(g^{-1} m)$. A $G$-CW-complex is called \emph{finite} if it has finitely many cells. The main result of the paper is the following:

\begin{theorem}\label{thm:main} Let $G$ be a finite $p$-group, and $k$ be a field of characteristic $p$. If $X$ is a finite $\un{n}$-Moore $G$-space over $k$, then the reduced homology module $\widetilde H_{n} (X, k)$ in dimension $n=\un{n}(1)$ is an endo-permutation $kG$-module generated by relative syzygies.
\end{theorem}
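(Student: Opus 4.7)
The plan is to work with the cellular chain complex $C_* := C_*(X;k)$ of $X$, which is a bounded complex of finitely generated permutation $kG$-modules (one summand $k[G/H]$ for each equivariant cell of isotropy type $H$). The $\un{n}$-Moore hypothesis translates to the statement that, for every subgroup $H \leq G$, the $H$-fixed-point complex $C_*^H = C_*(X^H;k)$ has reduced homology concentrated in the single degree $\un{n}(H)$. Setting $n := \un{n}(1)$ and $M := \widetilde H_n(X;k)$, the goal is to show that $M$ is endo-permutation and that $[M] \in D^{\Omega}(G)$.

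I would proceed by induction on $|G|$. The base case $G=1$ is trivial. For the inductive step, for every proper subgroup $H<G$, restriction makes $X$ into a finite $\un{n}|_{\Sub(H)}$-Moore $H$-space, so by the inductive hypothesis $\Res^G_H M$ is endo-permutation over $kH$. To upgrade to $G$, I would verify that $M\otimes_k M^*$ is a $p$-permutation $kG$-module, which by the inductive information reduces to checking the Brauer quotient at $G$ is a permutation $k$-module. This is computed from the permutation-module complex $C_*\otimes_k C_*^*$: its $G$-fixed-point complex has homology concentrated in degree $0$ by the K\"unneth theorem (since $C_*(X^G;k)$ has one-degree homology), and the trace contributions from proper subgroups are handled by the Moore property at those subgroups. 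This establishes the endo-permutation conclusion.

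For the statement that $[M]$ lies in the subgroup $D^{\Omega}(G)$ generated by relative syzygies, I would produce an explicit identity of the form
\[
[M] \;=\; (-1)^n\sum_{i,\,H} (-1)^i\, c_{i,H}\,[\Delta(G/H)] \quad\text{in } D(G),
\]
where $c_{i,H}$ counts the $G$-cells of $X$ in dimension $i$ with isotropy conjugate to $H$. The proof of this identity uses the skeletal filtration of $X$: attaching a single equivariant cell of type $(G/H,i)$ changes the class of the homology in $D(G)$ by $\pm [\Delta(G/H)]$ (up to projective summands that vanish in $D(G)$), and summing over all attached cells yields the claimed formula. Here one uses that each $\Delta(G/H)$ itself arises as the reduced $0$-th homology of the $0$-dimensional $\un{n}$-Moore $G$-space $G/H$, so that relative syzygies are built into the formalism from the start.

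The main obstacle, I expect, is the step that promotes the endo-permutation property from proper subgroups to $G$ itself. This requires identifying the Brauer quotient at $G$ on the tensor-complex level with the $G$-fixed-point data, carefully tracking the contribution of traces $\mathrm{tr}_H^G$ from proper subgroups and ensuring that the permutation-module structure propagates through the Brauer filtration without defect. The $\un{n}$-Moore hypothesis at \emph{all} subgroups, not just $1$ and $G$, is precisely what is needed to force these trace contributions into permutation form and to make the single-degree homology condition compatible with the K\"unneth decomposition of $C_*\otimes_k C_*^*$.
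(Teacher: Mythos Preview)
Your approach diverges substantially from the paper's, and there are genuine gaps in both halves.

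\textbf{The endo-permutation step.} Your plan is to induct on $|G|$, get $\Res^G_H(M\otimes M^*)$ permutation for all proper $H$, and then control the Brauer quotient at $G$ by looking at $(C_*\otimes C_*^*)^G$. The problem is that Brauer quotients do not commute with taking homology: you know $H_0\bigl((C_*\otimes C_*^*)^G\bigr)$ from K\"unneth, but that is not the same as $(M\otimes M^*)(G)$, and the passage between them requires exactly the trace analysis you wave away. Concretely, summands of $M\otimes M^*$ with vertex $G$ need to be shown to be trivial, and the ``trace contributions from proper subgroups are handled by the Moore property'' is not a proof of that. The paper never touches Brauer quotients: it instead shows, for a tight complex, that each short exact sequence $0\to \ker\partial_i\to k[X_i]\to \im\partial_i\to 0$ is $X_i$-split (splits on restriction to every isotropy subgroup of $X_i$), and then applies Bouc's lemma inductively. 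The $X_i$-split property is proved via the orbit category: for $H$ an isotropy of $X_i$ one has $i\le \un n(H)$ by tightness, so the truncated complex below $\un n(H)$ is exact as a complex of projective $k\Gamma_H$-modules, hence split. This single argument gives endo-permutation and the $D^\Omega$ formula simultaneously.

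\textbf{The $D^\Omega$ step.} Your displayed identity with alternating signs $(-1)^i$ is wrong: the Dade group is not a receptacle for Euler characteristics, and the actual relation (from the $X$-split lemma) is $[W]=\Omega_X+[V]$ with a plus sign. The paper's formula is $[M]=\sum_{i=m+1}^n \Omega_{X_i}$ with all positive signs. More importantly, your claim that ``attaching a single equivariant cell of type $(G/H,i)$ changes the class by $\pm[\Delta(G/H)]$'' presupposes exactly the $X_i$-split condition above, which you never establish. Without it the short exact sequence produced by cell attachment need not relate the kernel and cokernel by a relative syzygy in $D(G)$ at all.

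\textbf{What is missing structurally.} You also do not address the reduction from an arbitrary finite Moore $G$-space to one whose chain complex is tight. The paper does this by first joining with $S(V)$ for $V$ a large multiple of the regular representation (to force a strictly monotone dimension function without changing the homology) and then replacing the chain complex over the orbit category by a chain-homotopy-equivalent tight free complex. Only after this reduction do the splitting arguments go through.
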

 
This theorem is proved in Sections \ref{sect:MooreGSpaces} and \ref{sect:ProofMain}. We first prove it for tight Moore $G$-spaces (Proposition \ref{pro:Tight}) and then extend it to the general case. An $\un{n}$-Moore space $X$ is said to be \emph{tight} if the topological dimension of $X^H$ is equal to $\un{n}(H)$ for every $H \leq G$. For tight Moore $G$-spaces, we give an explicit formula that expresses the equivalence class of the homology group $\widetilde H_n (X, k)$ in terms of relative syzygies (see Proposition \ref{pro:Tight}). This formula plays a key role for relating the group of Moore $G$-spaces to the group of Borel-Smith functions and to the Dade group. We now introduce these groups and the homomorphisms between them.  

An endo-permutation $G$-module is called \emph{capped} if it has an indecomposable summand with vertex $G$, or equivalently, if $\End _k (M)$ has the trivial module $k$ as a summand. There is a suitable equivalence relation of endo-permutation modules, and the equivalence  classes of capped endo-permutation modules form a group under the tensor product operation (see Section \ref{sect:DadeGroup}). This group is called the Dade group of the group $G$, denoted by $D_k(G)$, or simply by $D(G)$ when $k$ is clear from the context.

For a non-empty finite $G$-set $X$, the kernel of the augmentation map $\varepsilon \colon kX \to k$ is called a \emph{relative syzygy} and denoted by $\Delta(X)$. It is shown by Alperin \cite{Alperin} that $\Delta(X)$ is an endo-permutation $kG$-module and it is capped when $|X^G| \neq 1$. We define $\Omega _X \in D_k(G)$ as the element
$$\Omega _X= \begin{cases} [\Delta(X)] &\text{if $X\neq \emptyset$ \text{and} $|X^G|\neq 1$}; \\ 0   & \text{otherwise}.  \end{cases}$$
The subgroup of $D(G)$ generated by relative syzygies is denoted by $D^{\Omega} (G)$ and it plays an important role for understanding the Dade group.   

\begin{definition} We say a Moore $G$-space is \emph{capped} if $X^G$ has nonzero reduced homology. The set of $G$-homotopy classes of capped Moore $G$-spaces form a commutative monoid with addition given by $[X]+[Y]=[X*Y]$, where $X*Y$ denotes the join of two $G$-CW-complexes. We define the group of Moore $G$-spaces $\cM(G)$ as the Grothendieck group of this monoid.
\end{definition}

The \emph{dimension function} of an $\un{n}$-Moore $G$-space is defined as the super class function $\Dim X$ with values
$$(\Dim X) (H)=\un{n} (H)+1$$ for all $H \leq G$. Let $C(G)$ denote the group of all super class functions of $G$. The map $\Dim: \cM (G) \to C(G)$ which takes $[X]-[Y]$ to $\Dim X-\Dim Y$ is a group homomorphism since $\Dim (X* Y)=\Dim X + \Dim Y$. In Proposition \ref{pro:DimSurj}, we prove that the homomorphism $\Dim$ is surjective. This follows from the fact that $C(G)$ is generated by super class functions of the form $\omega _X$, where $X$ denotes a finite $G$-set and $\omega _X$ is the function defined by $$ \omega _X (K) =\begin{cases} 1 & \text{if $X^K \neq \emptyset$} \\ 0 & \text{otherwise} 
\end{cases} $$ for all $K \leq G$. Note that if we consider a finite $G$-set $X$ as a discrete $G$-CW-complex, then $X$ is a finite Moore $G$-space with dimension function $\Dim X=\omega _X$.

We also define a group homomorphism $\Hn: \cM (G) \to D^{\Omega }(G)$ as a linear extension of the assignment that takes the equivalence class $[X]$ of a capped $\un{n}$-Moore $G$-space to the equivalence class of its reduced homology $[\widetilde H_n (X; k)]$ in $D^{\Omega} (G)$, where $n=\un{n} (1)$.  There is also a group homomorphism $\Psi : C(G) \to D^{\Omega} (G)$ that takes $\omega_X$ to $\Omega _X$ for every $G$-set $X$ (see \cite[Theorem 1.7]{Bouc-Aremark}). In Proposition \ref{pro:DependsOnly}, we show that $$\Hn =\Psi \circ \Dim.$$ This gives in particular that for an $\un{n}$-Moore $G$-space $X$, the equivalence class of its reduced homology $[\widetilde H_n (X; k)]$ in $D^{\Omega}(G)$ is uniquely determined by the function $\un{n}$. Moreover we prove the following theorem. 

\begin{theorem}\label{thm:mainSeq} Let $G$ be a finite $p$-group and $k$ a field of characteristic $p$.  Let $\cM _0 (G)$ denote the kernel of the homomorphism $\Hn$, and $C_b(G)$ denote the group of Borel-Smith functions (see \cite[Definition 3.1]{Bouc-Yalcin}). Then, there is a commuting diagram
$$\xymatrix{
0 \ar[r] & \cM_0(G) \ar[r] \ar[d]^{\Dim_0} &  \cM(G) \ar[d]^{\Dim} \ar[r]^{\Hn}
& D^{\Omega} (G)  \ar[d]^{=}  \ar[r] & 0  \\
0  \ar[r] & C_b(G) \ar[r] &  C(G) \ar[r]^{\Psi}
&  D^{\Omega} (G) \ar[r] & 0}\\ $$
where the maps $\Dim$ and $\Dim _0$ are surjective and the horizontal sequences are exact.
\end{theorem}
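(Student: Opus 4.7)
The plan is to deduce Theorem~\ref{thm:mainSeq} by an essentially formal diagram chase from three facts already established in the paper: (i) the identity $\Hn = \Psi \circ \Dim$ from Proposition~\ref{pro:DependsOnly}, which is precisely the commutativity of the middle square; (ii) the surjectivity of $\Dim \colon \cM(G) \to C(G)$ from Proposition~\ref{pro:DimSurj}; and (iii) the exactness of the bottom row, i.e., the surjectivity of $\Psi$ together with the identification $\ker \Psi = C_b(G)$. The right square commutes tautologically, since its right vertical arrow is the identity.

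For the bottom row, surjectivity of $\Psi$ is immediate: $C(G)$ is generated by the super class functions $\omega_X$ attached to finite $G$-sets $X$ (as noted in the paragraph preceding Proposition~\ref{pro:DimSurj}), and their images $\Psi(\omega_X) = \Omega_X$ generate $D^{\Omega}(G)$ by definition. The identification $\ker \Psi = C_b(G)$ is the main nonformal input, and I would invoke it from \cite{Bouc-Yalcin}. With this in hand, surjectivity of $\Hn$ in the top row follows from $\Hn = \Psi \circ \Dim$ as a composition of two surjections, and exactness at $\cM(G)$ is the defining equality $\cM_0(G) = \ker \Hn$.

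It remains to handle the left square. If $\alpha \in \cM_0(G)$, then $\Psi(\Dim \alpha) = \Hn(\alpha) = 0$ by (i), so $\Dim \alpha \in \ker \Psi = C_b(G)$; this shows that $\Dim$ restricts to a well-defined map $\Dim_0 \colon \cM_0(G) \to C_b(G)$, making the left square commute. For surjectivity of $\Dim_0$: given $f \in C_b(G) \subseteq C(G)$, ingredient (ii) produces $\alpha \in \cM(G)$ with $\Dim \alpha = f$; then $\Hn(\alpha) = \Psi(f) = 0$ because $f \in \ker \Psi$, hence $\alpha \in \cM_0(G)$ and $\Dim_0(\alpha) = f$.

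The only substantive input is (iii)—the identification $\ker \Psi = C_b(G)$—which is external to this theorem. Everything else reduces to combining Propositions~\ref{pro:DependsOnly} and \ref{pro:DimSurj} through a short diagram chase, so no real obstacle is expected once those ingredients are in place.
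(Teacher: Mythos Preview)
Your argument is correct as a formal deduction, but it diverges from the paper's proof on the one nontrivial point. You take the identification $\ker\Psi = C_b(G)$ as an external input from \cite{Bouc-Yalcin}, whereas the paper's stated intent (see the paragraph after the theorem in the introduction) is precisely \emph{not} to assume the exactness of the bottom row: ``In the proof of the above theorem we do not assume the exactness of the bottom sequence.'' Instead, the paper first sets up the diagram with $\ker\Psi$ in place of $C_b(G)$, obtains surjectivity of $\Dim_0\colon \cM_0(G)\to \ker\Psi$ from the Snake Lemma (your lifting argument, repackaged), and then proves $\ker\Psi = C_b(G)$ directly: the inclusion $C_b(G)\subseteq\ker\Psi$ uses the Dotzel--Hamrick theorem to realize any Borel--Smith function as $\Dim S(U)-\Dim S(V)$ for a virtual real representation $U-V$, and the reverse inclusion is checked by restricting to the relevant subquotients. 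The paper emphasizes that this avoids the tensor induction formula used in \cite{Bouc-Yalcin}, giving a topological reinterpretation of that exact sequence rather than invoking it.

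So your route is shorter and perfectly valid if one is willing to cite \cite{Bouc-Yalcin}, but it forfeits what the paper regards as the main content of the theorem: an independent proof of the bottom exact sequence. Everything else in your outline (commutativity via Proposition~\ref{pro:DependsOnly}, surjectivity of $\Dim$ via Proposition~\ref{pro:DimSurj}, surjectivity of $\Hn$ and of $\Dim_0$ by diagram chase) matches the paper.
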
 

In the proof of the above theorem we do not assume the exactness of the bottom sequence. It follows from the exactness of the 
top sequence and from the fact that the maps $\Dim$ and $\Dim _0$ are surjective (surjectivity of $\Dim _0$ follows from a theorem of Dotzel-Hamrick \cite{Dotzel-Hamrick}).  Note that the exactness of the bottom sequence is the main result of Bouc-Yal{\c c}{\i}n \cite{Bouc-Yalcin} and the proof given there is completely algebraic. The proof we obtain here can be considered as a topological interpretation of this short exact sequence.

In Section \ref{sect:Operations}, we consider operations on Moore $G$-spaces induced by actions of bisets on Moore $G$-spaces. 
We show that the assignment $G \to \cM(G)$ over all $p$-groups has an easy to describe biset functor structure, where the induction is given by join induction, and that $\Hn$ and $\Dim$ are natural transformations of biset functors. The induction operation on $\cM(-)$ is defined using join induction of $G$-posets $\jn _K ^H X$ and the key result here is that the homology of a join induction $\jn _K ^H X$ is isomorphic to the tensor induction of the homology of $X$. 
Using this we obtain a topological proof for Bouc's tensor induction formula for relative syzygies (see Theorem \ref{thm:TensorInd}). As a consequence we conclude that the diagram in Theorem \ref{thm:mainSeq} is a diagram of biset functors.  

The paper is organized as follows: In Section \ref{sect:DadeGroup}, we introduce some necessary definitions and background on Dade groups. We prove Theorem \ref{thm:main}
in Sections \ref{sect:MooreGSpaces} and \ref{sect:ProofMain}. In Section \ref{sect:GroupGMoore}, we introduce the group of Moore $G$-spaces $\cM(G)$ and prove Theorem \ref{thm:mainSeq}. In Section \ref{sect:Operations}, we define a biset functor structure for $\cM(-)$ and show that the diagram in Theorem \ref{thm:mainSeq} is a diagram of biset functors.

\section{Preliminaries on the Dade group}
\label{sect:DadeGroup}

Let $p$ be a prime number, $G$ be a finite $p$-group, and $k$ be a field of characteristic $p$. Throughout we assume that all $kG$-modules are finitely generated. A (left) $kG$-module $M$ is called an \emph{endo-permutation module} if $\End _{k} (M) \cong M\otimes_k M^*$ is isomorphic to a permutation $kG$-module. Here we view $\End _k (M)$ as a $kG$-module with diagonal action given by $(g\cdot f)(m)=gf(g^{-1}m)$. In this section we introduce some basic definitions and results on endo-permutation $kG$-modules that we will use in the paper. For more details on this material, we refer the reader to \cite[sect. 12.2]{Bouc-BisetBook} or \cite{Bouc-Tensor}.

Two endo-permutation $kG$-modules $M$ and $N$ are said to be \emph{compatible} if $M\oplus N$ is an endo-permutation $kG$-module. This is equivalent to the condition that $M \otimes _k N^*$ is a permutation $kG$-module (see \cite[Definition 12.2.4]{Bouc-BisetBook}). When $M$ and $N$ are compatible, we write $M\sim N$. An endo-permutation module $M$ is called \emph{capped} if it has an indecomposable summand with vertex $G$, or equivalently, if $\End_k(M)$ has the trivial module $k$ as a summand (see \cite[Lemma 12.2.6]{Bouc-BisetBook}). 
The relation $M \sim N$ defines an equivalence relation on capped endo-permutation $kG$-modules (see \cite[Theorem 12.2.8]{Bouc-BisetBook}).

Every capped endo-permutation module $M$ has a capped indecomposable summand, called the cap of $M$. Note that if $V$ is a cap of $M$, then $V\otimes_k M^*$ is a summand of $M\otimes_k M^*$ which is a permutation $kG$-module. This gives that $V\otimes_k M^*$ is a permutation $kG$-module, hence $V\sim M$. If $W$ is another capped indecomposable summand of $M$, then $V \cong W$ (see \cite[Lemma 12.2.9]{Bouc-BisetBook}), so the cap of $M$ is unique up to isomorphism. Two capped endo-permutation $kG$-modules are equivalent if and only if they have isomorphic caps (see \cite[Remark 12.2.11]{Bouc-BisetBook}).

The set of equivalence classes of capped endo-permutation modules has an abelian group structure under the addition given by $[M]+[N]=[M\otimes _k N]$. It is easy to see that this operation is well-defined (see \cite[Theorem 12.2.8]{Bouc-BisetBook}). This group is called the \emph{Dade group} of $G$ over $k$ and is denoted by $D_k(G)$, or simply by $D(G)$ when the field $k$ is clear from the context.  

For a non-empty $G$-set $X$, the kernel of the augmentation map $\varepsilon : kX \to k$ is called a \emph{relative syzygy} and is denoted by $\Delta(X)$. It is shown by Alperin \cite[Theorem 1]{Alperin} that $\Delta(X)$ is an endo-permutation module and it is capped if and only if $|X^G| \neq 1$ (see also \cite[Section 3.2]{Bouc-Tensor}). For a $G$-set $X$, let $\Omega _X$ denote the element in the Dade group $D(G)$ defined by $$\Omega _X= \begin{cases} [\Delta(X)] &\text{if $X\neq \emptyset$ \text{and} $|X^G|\neq 1$}; \\ 0   & \text{otherwise}.  \end{cases}$$  Note that if $X^G \neq \emptyset$, the module $\Delta (X)$ is a permutation module, so in this case we have $\Omega _X=0$. The subgroup of the Dade group generated by the set of elements $\Omega _X$, over all finite $G$-sets $X$, is denoted $D^{\Omega} (G)$ and called the Dade group generated by relative syzygies.

\begin{remark}\label{rem:coefficients} Let $\cO$ denote a complete noetherian local ring with residue field $k$ of characteristic $p>0$. The notion of an endo-permutation module and the Dade group can be extended to $\cO G$-modules which are $\cO$-free (called $\cO G$-lattices). 
In this case the Dade group is denoted by $D_{\cO}(G)$ and there is a natural map $\varphi : D_{\cO}(G) \to D_k (G)$ defined by reduction of coefficients. An element $x \in D_k (G)$ is said to have an integral lift if $x=\varphi (\overline x)$ for some $\overline x \in D_{\cO}(G)$. By definition, the elements of $D^{\Omega} (G)$ have integral lifts. This means that when we are working with $D^{\Omega}(G)$, it does not matter if we take the coefficients as $\cO$ or $k$. Note also that a relative syzygy over $k$ is obtained from an endo-permutation $\bbF _pG$-module via tensoring with $k$ over $\bbF_p$. In particular, the group $D^{\Omega} (G)$ does not depend on the field $k$ as long as it is a field with characteristic $p$.
\end{remark}

Now we are going to state some results related to relative syzygies that we are going to use later in the paper. In \cite[Section 3.2]{Bouc-Tensor} these results are stated in $\cO$-coefficients, but they also hold in $k$-coefficients. So in the results stated below $R$ is a commutative coefficient ring which is either a field $k$ of characteristic $p$, or a complete noetherian local ring $\cO$ with residue field $k$ of characteristic $p$. We refer the reader to \cite[Section 3.2]{Bouc-Tensor} for more details.  

\begin{definition} Let $G$ be a finite group and $X$ be a finite $G$-set. A sequence of $RG$-modules $0 \to M_0 \to M_1 \to M_1 \to 0$ is called \emph{$X$-split} if the corresponding sequence $$ 0 \to RX \otimes _R M_0 \to RX \otimes _R M_1 \to RX \otimes _R M_2 \to 0,$$ obtained by tensoring everything with $RX$, splits. 
\end{definition}

There is an alternative criterion for a sequence to be $X$-split. 

\begin{lemma}\label{lem:XSplit} Let $G$ be a finite group and $X$ be a finite $G$-set. A sequence of $RG$-modules is $X$-split if and only if it splits as a sequence of $RG_x$-modules for every stabilizer $G_x$ in $G$. 
\end{lemma}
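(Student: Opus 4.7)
The plan is to reduce to the transitive case and then use the restriction--induction adjunction. First, decompose $X$ into its $G$-orbits $X = \bigsqcup_i G\cdot x_i$, so that $RX \cong \bigoplus_i R[G/G_{x_i}]$ as $RG$-modules. Since tensoring with an $RG$-module commutes with finite direct sums, the $X$-split condition is equivalent to the $R[G/G_{x_i}]$-split condition for every orbit representative $x_i$, while ``splits as $RG_x$-modules for every stabilizer $G_x$'' is equivalent to ``splits as $RG_{x_i}$-modules for every $i$''. Hence it suffices to prove the lemma for a transitive $G$-set $X = G/H$, i.e. to show that a sequence is $R[G/H]$-split if and only if its restriction is split over $RH$.

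For the transitive case, I would use the natural $RG$-module isomorphism $R[G/H]\otimes_R N \cong \Ind_H^G \Res_H^G N$, given by $gH \otimes n \mapsto g \otimes g^{-1}n$. The statement then becomes: $\Ind_H^G \Res_H^G$ of the sequence splits as a sequence of $RG$-modules if and only if $\Res_H^G$ of the sequence splits as a sequence of $RH$-modules.

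The ``$\Leftarrow$'' direction is immediate: $\Ind_H^G$ is an exact, additive functor, so it preserves splittings. For the other direction, I would invoke the fact that the unit $\eta_N \colon \Res_H^G N \to \Res_H^G \Ind_H^G \Res_H^G N$ and counit $\epsilon_M \colon \Res_H^G \Ind_H^G M \to M$ of the $(\Ind_H^G,\Res_H^G)$-adjunction satisfy $\epsilon_{\Res_H^G N} \circ \eta_N = \id$ for every $RG$-module $N$; equivalently, by Mackey's formula the trivial double coset contributes $\Res_H^G N$ as a natural $RH$-direct summand of $\Res_H^G \Ind_H^G \Res_H^G N$. Given an $RG$-linear retraction $s$ of $\Ind_H^G\Res_H^G f$, the $RH$-map $t := \epsilon_{M_0} \circ \Res_H^G(s) \circ \eta_{M_1}$ is a retraction of $\Res_H^G f$: naturality of $\eta$ applied to $f$ gives $\Res_H^G(\Ind_H^G\Res_H^G f) \circ \eta_{M_0} = \eta_{M_1} \circ \Res_H^G(f)$, so composing with $\Res_H^G(s)$ and $\epsilon_{M_0}$ and using $\epsilon \circ \eta = \id$ yields $t \circ \Res_H^G(f) = \id$.

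The argument is essentially standard homological algebra, so there is no real obstacle, only minor bookkeeping. The main conceptual inputs are the identification $R[G/H]\otimes N \cong \Ind_H^G \Res_H^G N$ and the fact that $\Res_H^G$ is a natural direct summand of $\Res_H^G \Ind_H^G \Res_H^G$, both of which are classical.
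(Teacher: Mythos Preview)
Your argument is correct; the reduction to transitive $G$-sets and the use of the natural isomorphism $R[G/H]\otimes_R N\cong \Ind_H^G\Res_H^G N$ together with the Mackey decomposition (making $\Res_H^G N$ a natural retract of $\Res_H^G\Ind_H^G\Res_H^G N$) is exactly the standard way to prove this lemma. A tiny notational point: when you write $\epsilon_{M_0}$ you mean $\epsilon_{\Res_H^G M_0}$, since you defined $\epsilon_M$ for $RH$-modules $M$.

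As for comparison with the paper: the paper does not actually give a proof of this lemma but simply refers to \cite[Lemma~2.6]{Nucinkis}. So you have supplied a complete self-contained argument where the paper only cites the literature.
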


\begin{proof} See \cite[Lemma 2.6]{Nucinkis}.
\end{proof}

We now state the main technical result that we will use in the paper.

\begin{lemma}\label{lem:HellerShift}
Let $G$ be a $p$-group and $X$ be a finite non-empty $G$-set. Suppose that $$0\to W \to RX \to V \to 0 $$ is an $X$-split exact sequence of $RG$-lattices. Then,
\begin{enumerate}
\item The lattice $V$ is an endo-permutation $RG$-lattice if and only if $W$ is an endo-permutation $RG$-lattice.
\item If $X^G = \emptyset$, then $V$ is capped if and only if $W$ is capped.
\item If $V$ and $W$ are capped endo-permutation $RG$-lattices, then $$W=\Omega _X +V$$ in $D_R(G)$. 
\end{enumerate}  
\end{lemma}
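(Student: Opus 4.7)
The plan is to compare the given $X$-split sequence with its $R$-dual and with the augmentation sequence $0 \to \Delta(X) \to RX \to R \to 0$, using Lemma~\ref{lem:XSplit} as the main technical input to track which sequences remain $X$-split under tensoring and dualization.

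Because $RX$ is self-dual as an $RG$-lattice and, by Lemma~\ref{lem:XSplit}, $X$-splitness reduces to splitting over each stabilizer $G_x$, both duality and tensoring with any lattice preserve $X$-splitness. Dualizing the given sequence thus yields an $X$-split sequence $0 \to V^* \to RX \to W^* \to 0$. Tensoring the original by $V^*$ and the dualized one by $W$, I would form two $X$-split sequences sharing the common first term $W \otimes_R V^*$, namely $0 \to W \otimes V^* \to RX \otimes V^* \to V \otimes V^* \to 0$ and $0 \to W \otimes V^* \to W \otimes RX \to W \otimes W^* \to 0$. Each middle term has the shape $RX \otimes N \cong \bigoplus_{x \in [G \backslash X]} \Ind_{G_x}^G \Res_{G_x}^G N$, which is a permutation $RG$-module iff every $\Res_{G_x}^G N$ is permutation over $RG_x$. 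The $X$-splitting forces $\Res_{G_x} V$ and $\Res_{G_x} W$ to be direct summands of the permutation module $\Res_{G_x} RX$, so the endo-permutation property of either $V$ or $W$ (detected stabilizer-wise after tensoring with $V^*$ or $W^*$) propagates through the above two sequences to the other module, giving part~(1).

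For part~(3), once $V$ and $W$ are capped endo-permutation, the two sequences above identify $[V \otimes V^*]$ and $[W \otimes W^*]$ in $D_R(G)$ with the classes of the respective middle terms modulo permutation summands, so their difference equals $[W \otimes RX] - [RX \otimes V^*]$ up to a permutation correction. To convert this difference into $\Omega_X$, I would tensor the augmentation sequence $0 \to \Delta(X) \to RX \to R \to 0$ (itself $X$-split, as $RX$ splits off a trivial summand over each $G_x$) successively with $W$ and with $V^*$; the resulting $X$-split exact sequences with $RX$ in the middle let one replace $[W \otimes RX]$ and $[RX \otimes V^*]$ by $\Omega_X + [W]$ and $\Omega_X + [V^*]$ up to permutation, and after canceling the $V \otimes V^*$ and $W \otimes W^*$ contributions one arrives at $[W] - [V] = \Omega_X$, i.e.\ $W = \Omega_X + V$ in $D_R(G)$. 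Part~(2) then drops out: when $X^G = \emptyset$ one has $(RX)^G = 0$, so taking $G$-fixed endomorphism rings in the comparison above shows that $R$ is a summand of $\End_R(V)$ iff $R$ is a summand of $\End_R(W)$, since the only possible discrepancy comes from $\Omega_X$ and from $(RX)^G$, the latter being zero.

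The step I expect to be most delicate is the bookkeeping in part~(3): tracking signs and conventions carefully so that the comparison with the augmentation sequence produces exactly $W = \Omega_X + V$, with $W$ (the kernel) playing the role of $\Delta(X)$ in the augmentation sequence, rather than the opposite-sign relation. Parts~(1) and~(2) are essentially corollaries of the tensor-product comparison machinery set up for~(3), the only extra subtlety in~(2) being the use of $X^G = \emptyset$ to eliminate unwanted trivial contributions from $(RX)^G$ when testing capped-ness via $G$-invariants.
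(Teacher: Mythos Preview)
The paper itself does not prove this lemma; it simply cites \cite[Lemma 3.2.8]{Bouc-Tensor}. Your overall architecture---dualize, tensor, and compare with the augmentation sequence using Lemma~\ref{lem:XSplit}---is exactly the standard route taken in Bouc's proof, so you are on the right track. However, two steps in your sketch do not work as written.

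For part~(1), the phrase ``detected stabilizer-wise'' is the problem. Knowing that $\Res_{G_x}(W\otimes W^*)$ is permutation for every $x\in X$ does \emph{not} imply that $W\otimes W^*$ is a permutation $RG$-module; local permutation-ness does not glue. What makes the argument go through is a relative Schanuel lemma: your two $X$-split sequences share the kernel $W\otimes V^*$ and have $X$-projective middle terms $RX\otimes V^*$ and $W\otimes RX$, so
\[
(V\otimes V^*)\oplus(W\otimes RX)\;\cong\;(W\otimes W^*)\oplus(RX\otimes V^*).
\]
Both $RX\otimes V^*$ and $W\otimes RX$ are permutation (this is where your stabilizer argument is actually used), so if $V\otimes V^*$ is permutation then $W\otimes W^*$ is a summand of a permutation module, hence permutation. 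You should make this Schanuel step explicit.

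For part~(3), the bookkeeping in $D_R(G)$ is incoherent as stated: $[V\otimes V^*]$, $[W\otimes W^*]$, $[W\otimes RX]$ and $[RX\otimes V^*]$ are all zero in $D_R(G)$, so the identities you write are vacuous and cannot produce $\Omega_X$. The clean argument (and the one in Bouc) is a single Schanuel comparison: tensor the augmentation sequence by $V$ to get the $X$-split sequence $0\to\Delta(X)\otimes V\to RX\otimes V\to V\to 0$, and compare it with the given sequence $0\to W\to RX\to V\to 0$. Both have cokernel $V$ and $X$-projective middle term, so
\[
W\oplus(RX\otimes V)\;\cong\;(\Delta(X)\otimes V)\oplus RX.
\]
Since $RX\otimes V$ and $RX$ are permutation, this shows $W\sim\Delta(X)\otimes V$, i.e.\ $[W]=\Omega_X+[V]$. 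Your idea of bringing in the augmentation sequence is correct; the execution should go through this direct comparison rather than through differences of zero classes.
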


\begin{proof} See \cite[Lemma 3.2.8]{Bouc-Tensor}.
\end{proof}

The following also holds:

\begin{lemma}\label{lem:Secondlemma}
Let $G$ be a $p$-group. Suppose that $X$ and $Y$ are two non-empty finite $G$-sets such that for any subgroup $H$ of $G$, the set $X^H$ is non-empty if and only if $Y^H$ is non-empty. Then $\Omega _X=\Omega _Y$ in $D_R(G)$.
\end{lemma}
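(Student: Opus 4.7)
The plan is to reduce to the case $X^G = Y^G = \emptyset$, and then to relate $\Omega_Y$ directly to $\Omega_{X \sqcup Y}$ via a single $(X \sqcup Y)$-split short exact sequence. If $X^G \neq \emptyset$, the hypothesis gives $Y^G \neq \emptyset$; in that situation $\Delta(X) \cong R^{|X^G|-1} \oplus R[X \setminus X^G]$ is a permutation module whose only indecomposable summand of vertex $G$ is the trivial module $R$ (since $G$ is a $p$-group and $(X \setminus X^G)^G = \emptyset$), and the same holds for $\Delta(Y)$. Together with the convention $\Omega_X = 0$ when $|X^G| = 1$, this forces $\Omega_X = 0 = \Omega_Y$. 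So I would reduce to the case $X^G = Y^G = \emptyset$, where $\Omega_X = [\Delta(X)]$ and $\Omega_Y = [\Delta(Y)]$.

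The key construction is the short exact sequence
\[
0 \longrightarrow \Delta(Y) \longrightarrow R[X \sqcup Y] \xrightarrow{\phi} R[X] \oplus R \longrightarrow 0,
\]
where $\phi(a,b) = (a, \varepsilon_Y(b))$ for $a \in R[X]$, $b \in R[Y]$. To verify that this sequence is $(X \sqcup Y)$-split, by Lemma \ref{lem:XSplit} it suffices to exhibit a $G_z$-equivariant section for each $z \in X \sqcup Y$; the candidate $s(a,r) = (a, r\, y_0)$ is a section of $\phi$ whenever $y_0 \in Y^{G_z}$. If $z \in Y$, one takes $y_0 = z$. If $z \in X$, then $z \in X^{G_z}$ so $X^{G_z} \neq \emptyset$, and by hypothesis $Y^{G_z} \neq \emptyset$ as well, so a valid $y_0$ exists. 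This is the unique point in the proof where the hypothesis $\omega_X = \omega_Y$ enters.

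With the splitting established, Lemma \ref{lem:HellerShift} applied to the indexing $G$-set $X \sqcup Y$ (which has empty $G$-fixed point set) yields $[\Delta(Y)] = \Omega_{X \sqcup Y} + [R[X] \oplus R]$ in $D_R(G)$. Since $X^G = \emptyset$ and $G$ is a $p$-group, no indecomposable summand of $R[X]$ has vertex $G$, so the cap of $R[X] \oplus R$ is the explicit trivial summand and $[R[X] \oplus R] = 0$. Hence $\Omega_Y = \Omega_{X \sqcup Y}$, and the symmetric construction exchanging the roles of $X$ and $Y$ gives $\Omega_X = \Omega_{X \sqcup Y}$; combining these yields $\Omega_X = \Omega_Y$.

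The step I expect to be the main obstacle is identifying the correct short exact sequence: Lemma \ref{lem:HellerShift} requires the middle term to be a permutation module, which rules out the more obvious candidate with $\Delta(X \sqcup Y)$ in the middle, and the right-hand term must be engineered so that the hypothesis $\omega_X = \omega_Y$ becomes exactly what is needed to produce the $G_z$-equivariant sections.
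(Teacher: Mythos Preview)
Your argument is correct. The paper itself does not give a proof but simply cites \cite[Lemma 3.2.8]{Bouc-Tensor}; the proof in that reference proceeds exactly along the lines you describe, reducing to the case $X^G=Y^G=\emptyset$ and then passing through $\Omega_{X\sqcup Y}$ via an $(X\sqcup Y)$-split short exact sequence with middle term $R[X\sqcup Y]$ and quotient the capped permutation lattice $R[X]\oplus R$.
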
 

\begin{proof} See \cite[Lemma 3.2.7]{Bouc-Tensor}.
\end{proof}

\section{Algebraic Moore $G$-complexes}
\label{sect:MooreGSpaces}

Let $G$ be a finite group and $\cH$ be a family of subgroups of $G$ closed under conjugation and taking subgroups. The orbit category $\Or _{\cH} G$ over the family $\cH$ is defined as the category whose objects are transitive $G$-sets $G/H$ where $H \in \cH$, and whose morphisms are $G$-maps $G/H \to G/K$. Throughout this paper we assume that the family $\cH$ is the family of all subgroups of $G$ and we denote the orbit category simply by $\G_G :=\Or G$. 

Let $R$ be a commutative ring of coefficients. An $R\G _G$-module $M$ is a contravariant functor from the category $\G_G$ to the category of $R$-modules. The value of an $R\G_G$-module $M$ at $G/H$ is denoted $M(H)$. By identifying $\Aut _{\G_G } (G/H)$ with $W_GH :=N_G(H)/H$, we can consider $M(H)$ as a $W_G(H)$-module. In particular, $M(1)$ is an $RG$-module. 

The category of $R\G_G$-modules is an abelian category, so the usual concepts of projective module, exact sequence, and chain complexes are available. 
For more information on modules over the orbit category, we refer the reader to L\"uck \cite[\S 9, \S 17]{Lueck} and tom Dieck \cite[\S 10-11]{tDieck-BlueBook}.

\begin{definition}\label{defn:Free} For a $G$-set $X$, we define $R\G_G$-module $R[X^?]$ as the module with values at $G/H$ given by $R[X^H]$, with obvious induced maps. A module over the orbit category $\Or_{\cH} G$ is called \emph{free} if it is isomorphic to a direct sum of modules of the form $R[(G/K)^?]$ with $K \in \cH$. By the Yoneda lemma every free $R\G_G$-module is projective (see \cite[Section 2A]{HPY}). 
\end{definition}

Let $X$ be a $G$-CW-complex. The reduced chain complex of $X$ over the orbit category is the functor $\widetilde C_* (X^?; R)$ from orbit category $\G_G$ to the category of chain complexes of $R$-modules, where for each $H\leq G$, the object $G/H$ is mapped to the reduced cellular chain complex  $\widetilde C_* (X^H ; R)$. This gives rise to a chain complex of $R\G_G$-modules 
$$ \widetilde C_* (X^?, R) : \cdots \to C_i (X^? ; R) \maprt{\bd _i} C_{i-1} (X^? ; R) \to \dots \to C_0 (X^? ; R) \maprt{\varepsilon} \un{R} \to 0$$
with boundary maps given by $R\G_G$-module maps between the chain modules $C_i (X^?; R)$, where for each $i\geq 0$, the chain module $C_i (X^?; R)$ is the $R\G_G$-module defined by $G/H \to C_i (X^H ; R)$. 

In the above sequence $\un{R}$ denotes the constant functor with values $\un{R} (H)=R$ for each $H \leq G$ and the identity map $\id : R \to R$ as the induced map $f^* \colon \un{R}(G/H) \to \un{R} (G/K)$ between $R$-modules for every $G$-map $f: G/K \to G/H$. The augmentation map $\varepsilon$ is defined as the $R\G_G$-homomorphism such that for each $H \leq G$, the map $\varepsilon (H): C_0(X^H; R) \to R$ is the $R$-linear map which takes every $0$-cell $\sigma \in X^H$ to $1$. By convention we take $\widetilde C_{-1} (X^?; R)=\un{R}$ and $\bd _{0}=\varepsilon$.

\begin{lemma}\label{lem:projective} The reduced chain complex $\widetilde C_*(X^?; R)$ of a $G$-CW-complex $X$ is a chain complex of free $R\G_G$-modules.
\end{lemma}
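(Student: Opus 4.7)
The plan is to read off, degree by degree, a free decomposition of $\widetilde C_*(X^?; R)$ directly from the $G$-CW-structure of $X$. For each $i \geq 0$, let $\Sigma_i$ be the set of (open) $i$-cells of $X$; by the definition of a $G$-CW-complex the group $G$ permutes $\Sigma_i$, and the stabilizer of each cell fixes it pointwise, so $R[\Sigma_i]$ is an honest permutation $RG$-module. Choosing a set $I_i$ of orbit representatives and writing $K_\sigma = G_\sigma$ for the stabilizer of $\sigma \in I_i$ gives a decomposition of $G$-sets $\Sigma_i \cong \bigsqcup_{\sigma \in I_i} G/K_\sigma$.

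The key point is that taking $H$-fixed points commutes with this decomposition and produces exactly the representable functors used in Definition \ref{defn:Free}. Indeed, for any $H \leq G$ one has
$$C_i(X^H; R) = R[\Sigma_i^H] = \bigoplus_{\sigma \in I_i} R[(G/K_\sigma)^H],$$
and the isomorphism is natural in $G/H \in \G_G$ because every $G$-map $G/H \to G/K$ induces the obvious restriction between fixed-point sets. Hence as an $R\G_G$-module
$$C_i(X^?; R) \cong \bigoplus_{\sigma \in I_i} R[(G/K_\sigma)^?],$$
which is free in the sense of Definition \ref{defn:Free}. I would then note that the boundary maps $\partial_i$ and the augmentation $\varepsilon$ are $R\G_G$-morphisms by construction, so no separate verification is required there.

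It remains to treat the bottom term $\widetilde C_{-1}(X^?; R) = \un{R}$. Here I would simply observe that $(G/G)^H$ is a one-point set for every $H \leq G$, and that every morphism in $\G_G$ restricts to the unique map of one-point sets; therefore $R[(G/G)^?] \cong \un{R}$ as $R\G_G$-modules. Taking $K = G \in \cH$ in Definition \ref{defn:Free} shows that $\un{R}$ is itself free. Combining the two cases, every term of $\widetilde C_*(X^?; R)$ is free over $R\G_G$, which is what is claimed.

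There is no serious obstacle; the only subtle point is the first one above, namely that the $G$-CW hypothesis guarantees trivial stabilizer action on each cell, so that $R[\Sigma_i]$ really is a permutation module and not a twisted one. Once that is in place the proof is essentially a bookkeeping of orbits combined with the Yoneda-type identification $R[(G/K)^?](H) = R[(G/K)^H]$ already implicit in Definition \ref{defn:Free}.
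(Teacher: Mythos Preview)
Your argument is correct and is essentially the same as the paper's: the paper simply writes $C_i(X^?;R)\cong R[X_i^?]$ for the $G$-set $X_i$ of $i$-cells, which is your orbit decomposition $\bigoplus_{\sigma\in I_i} R[(G/K_\sigma)^?]$ written more compactly, and it handles $\un{R}\cong R[(G/G)^?]$ exactly as you do. Your added remark that the $G$-CW hypothesis forces stabilizers to fix cells pointwise (so that $C_i(X^H;R)=R[\Sigma_i^H]$) is a useful clarification that the paper leaves implicit.
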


\begin{proof} If $i\geq 0$, then for each $H \leq G$, the chain module $C_i (X^H ; R)$ is isomorphic to the permutation module $R[X_i ^H]$, where $X_i $ is the $G$-set of $i$-dimensional cells in $X$. This gives an isomorphism of $R\G_G$-modules $C_i (X^?; R) \cong R[X_i ^? ]
$, hence $C_i (X^?; R)$ is a free $R\G _G$-module for every $i \geq 0$ (see \cite[9.16]{Lueck} or \cite[Ex. 2.4]{HPY}). The constant functor $\un{R}$ is isomorphic to the module $R[(G/G) ^?]$ which is a free $R\G_G$-module because we assumed $\cH$ is the family of all subgroups of $G$, in particular, $G \in \cH$. Hence  $C_i (X^?; R)$ is free for all $i\geq -1$.
\end{proof}

In the rest of the section we state our results for chain complexes of projective modules over the orbit category. We assume that all the chain complexes we consider are \emph{bounded from below}, i.e., there is an integer $s$ such that $\bC_i=0$ for all $i<s$.
We say $\bC$ is \emph{finite-dimensional} if there is an $n$ such that $\bC_i = 0$ for all $i\geq n+1$. If $\bC\neq 0$, then the smallest such integer is called the dimension of $\bC$.  For more information on chain complexes over the orbit category, we refer the reader to \cite[\S 2]{Hambleton-Yalcin-HomRep} or \cite[\S 2, \S6]{HPY}.

\begin{definition}\label{def:MooreComplex} Let $\bC$ be a chain complex of projective $R\G_G$-modules and let $\un{n}: \Sub(G) \to \bbZ$ be a super class function. We call $\bC$ an \emph{$\un{n}$-Moore $R\G_G$-complex} if for every $H \in \cH$, the homology group $H_i(\bC(H))$ is zero for every $i\neq \un{n}(H)$. We say $\bC$ is \emph{tight} if  for every $H \in \cH$, the chain complex $\bC (H)$ is non-zero and has dimension equal to $\un{n}(H)$. A Moore $R\G_G$-complex $\bC$ is called \emph{capped} if $G \in \cH$ and $H_* (\bC(G))$ is non-zero.
\end{definition}

If $X$ is an $\un{n}$-Moore $G$-space over $R$ as in Definition \ref{def:main}, then by Lemma \ref{lem:projective}, the reduced chain complex $\widetilde C_* (X^?; R)$ is an $\un{n}$-Moore $R\G_G$-complex. Moreover, if $X$ is a capped Moore space, then the chain complex $\widetilde C_* (X^?; k)$ is a capped $R\G_G$-complex.  

\begin{lemma}\label{lem:main} Let $\bC$ be a chain complex of projective $R\G_G$-modules (bounded from below). Suppose that $\bC$ is a tight $\un{n}$-Moore $R\G_G$-complex and $H$ is a subgroup of $G$. Then, for every $i\leq \un{n}(H)$, the short exact sequence $$ 0 \to \ker \bd_i \to \bC _i \to \mathrm{im}\, \bd _i \to 0$$ splits as a sequence of $R\G_H$-modules, where $\G _H=\Or H$.
\end{lemma}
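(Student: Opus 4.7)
The plan is to reduce the problem to showing that, when restricted to $\G_H$, the module $B_{i-1} := \im \bd_i$ is a projective $R\G_H$-module; the splitting of $0 \to \ker \bd_i \to \bC_i \to B_{i-1} \to 0$ then follows because short exact sequences of modules with projective right-hand term split.

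The first and essential step is a dimension comparison: for every $K \leq H$ one should have $\un{n}(K) \geq \un{n}(H)$. I would prove this by observing that every projective $R\G_G$-module $P$ satisfies the monotonicity $P(H) \neq 0 \Rightarrow P(K) \neq 0$ whenever $K \leq H$. For a free module $R[(G/L)^?]$ this is immediate from the inclusion of fixed point sets $(G/L)^H \subseteq (G/L)^K$, which makes the structural map $R[(G/L)^H] \to R[(G/L)^K]$ induced by the morphism $G/K \to G/H$ in $\G_G$ injective; the property then passes to arbitrary summands. Applied to $P = \bC_{\un{n}(H)}$, which is nonzero at $G/H$ by tightness, this forces $\bC_{\un{n}(H)}(K) \neq 0$, hence $\un{n}(K) \geq \un{n}(H)$ by tightness at $K$. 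Consequently $H_j(\bC(K)) = 0$ for all $K \leq H$ and all $j \leq \un{n}(H) - 1$, so the restricted complex $\bC|_{\G_H}$ is a chain complex of projective $R\G_H$-modules (restriction preserves projectivity via the Mackey-type decomposition of $R[(G/L)^?]|_{\G_H}$ into free $R\G_H$-modules) that is acyclic in every degree strictly below $\un{n}(H)$.

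Given this acyclicity, the remainder is a standard dimension-shifting induction on $j$. Starting from a degree below the bottom of $\bC$, where $B_j = 0$ is trivially projective, assume inductively that $B_{j-1}$ has been shown to be a projective $R\G_H$-module for some $j \leq \un{n}(H) - 1$. Acyclicity in degree $j$ identifies $Z_j$ with $B_j$, so the restricted short exact sequence $0 \to B_j \to \bC_j \to B_{j-1} \to 0$ has projective cokernel, therefore splits, and exhibits $B_j$ as a direct summand of the projective module $\bC_j|_{\G_H}$. Taking $j = i-1$ finishes the proof for every $i \leq \un{n}(H)$. The main obstacle — and the only step where tightness is genuinely used — is the dimension inequality $\un{n}(K) \geq \un{n}(H)$ for $K \leq H$; once this is in hand, everything reduces to the standard splicing argument for chain complexes of projective modules.
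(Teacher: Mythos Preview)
Your proof is correct and follows essentially the same approach as the paper: establish the monotonicity $\un{n}(K)\geq\un{n}(H)$ for $K\leq H$, deduce that the restricted complex is acyclic below $\un{n}(H)$, note that restriction preserves projectivity, and conclude that the resulting exact complex of projectives splits. The only differences are expository---you prove the monotonicity of the dimension function directly (via injectivity of the structure maps on free modules and passage to summands) where the paper cites \cite[Lemma 2.6]{Hambleton-Yalcin-HomRep}, and you unwind the splitting of an exact complex of projectives as an explicit dimension-shifting induction where the paper simply asserts it.
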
 

\begin{proof} Let $s$ be an integer such that $\bC_i=0$ for $i<s$. For every $K\leq H$, the chain complex $\bC(K)$ has zero homology except in dimension $\un{n}(K)$, which is equal to the chain complex dimension of $\bC(K)$. The dimension function of a projective chain complex is monotone (see \cite[Definition 2.5, Lemma 2.6]{Hambleton-Yalcin-HomRep}). Hence we have $\un{n}(K) \geq \un{n} (H)$ for every $K\leq H$. This gives that the truncated complex
\begin{equation}\label{Eqn:Sequence} 0 \to \ker \bd _{\un{n} (H)} \to \bC_{\un{n} (H) } \to \cdots \to \bC_s \to 0
\end{equation}
is exact when it is considered as a sequence of modules over $R\G _H$. Note that $\G_H$ is a subcategory of $\G_G$, so there is an induced restriction map $\Res ^G _H$ that takes projective $R\G_G$-modules to projective $R\G_H$-modules (see \cite[Proposition 3.7]{HPY}). This implies that $\Res ^G _H \bC_i$ is a projective $R\G_H$-module for every $i\geq s$, hence the sequence (\ref{Eqn:Sequence}) splits as a sequence of $R\G_H$-modules.  
\end{proof}

Note that for the algebraic theory of Moore $G$-spaces, it is enough to consider projective $R\G_G$-modules, but for obtaining results related to endo-permutation modules one would need these chain complexes to be chain complexes of free $R\G_G$-modules. When $G$ is a finite $p$-group and $k$ is a field of characteristic $p$, these two conditions are equivalent.

\begin{lemma}\label{lem:projfree} Let $G$ be a finite $p$-group and $k$ be a field of characteristic $p$. Then every projective $k\G_G$-module is free.
\end{lemma}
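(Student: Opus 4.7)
The plan is to identify the indecomposable projective $k\G_G$-modules with the representable functors $P_H := k[(G/H)^?]$ from Definition \ref{defn:Free}, and then invoke the Krull--Schmidt--Azumaya theorem. The key input is that, for $G$ a finite $p$-group and $k$ of characteristic $p$, the endomorphism ring of each $P_H$ turns out to be local.

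The first step is to use the Yoneda lemma to compute
$$\End_{k\G_G}(P_H) \;\cong\; P_H(G/H) \;=\; k[(G/H)^H] \;=\; k[N_G(H)/H],$$
using that the $H$-fixed cosets in $G/H$ are precisely the $gH$ with $g \in N_G(H)$. Since $G$ is a finite $p$-group, so is the quotient $N_G(H)/H$, and the group algebra of a finite $p$-group over a field of characteristic $p$ is a local ring (its Jacobson radical is the augmentation ideal, with residue field $k$). Hence each $P_H$ has a local endomorphism ring, and in particular is indecomposable.

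The second step is to observe that every free $k\G_G$-module is, by Definition \ref{defn:Free}, a direct sum of modules of the form $P_H$, each summand having a local endomorphism ring. By the Krull--Schmidt--Azumaya theorem, any direct summand of such a sum is isomorphic to a direct sum of some subcollection of the $P_H$. Since any projective module is, by definition, a direct summand of some free module, it follows that every projective $k\G_G$-module is itself isomorphic to a direct sum of representables $P_H$, and is therefore free.

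The main content lies in the endomorphism-ring computation together with the local-ring property; the passage from indecomposable projectives to arbitrary projectives is then a standard invocation of Krull--Schmidt--Azumaya. The only mild subtlety is that the free module of which a given projective is a summand need not be finitely generated, but this is precisely the generality Azumaya's theorem handles, so no genuine obstacle arises.
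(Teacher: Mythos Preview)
Your proof is correct and takes a genuinely different route from the paper. The paper invokes L\"uck's splitting functors $E_H$ and $S_H$ (\cite[Corollary 9.40, Lemma 9.31]{Lueck}): every projective $k\G_G$-module decomposes as $\bigoplus_H E_H S_H P$, where $S_HP$ is a projective (hence free) $k[N_G(H)/H]$-module and $E_H$ carries free modules to free $k\G_G$-modules. Your argument instead computes $\End_{k\G_G}(P_H)$ directly via Yoneda, identifies it with the local ring $k[N_G(H)/H]$ (up to an irrelevant opposite, since group algebras satisfy $kW\cong (kW)^{\mathrm{op}}$), and then appeals to Krull--Schmidt--Azumaya. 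Your approach is more self-contained and avoids importing the $E_H/S_H$ machinery; the paper's approach has the advantage of quoting a structural decomposition that is already available in the literature and that generalizes beyond the $p$-group case. Both arguments ultimately rest on the same underlying fact---that $k[N_G(H)/H]$ is local when $G$ is a $p$-group and $\operatorname{char} k=p$---but package it differently.
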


\begin{proof} By \cite[Corollary 9.40]{Lueck}, every projective $k\G_G$-module $P$ is a direct sum of modules of the form $E_H S_H P$ where $H\leq G$ and $E_H$ and $S_H$ are functors defined in \cite[pg. 170]{Lueck}. Since the functor $S_H$ takes projectives to projectives, $S_H P $ is a projective $kN_G(H)/H$-module (see \cite[Lemma 9.31(c)]{Lueck}). The group $N_G(H)/H$ is a $p$-group and $k$ is a field of characteristic $p$, hence $S_H P$ is a free module. The functor $E_H $ takes free modules to free $k\G _G$-modules (see \cite[Lemma 9.31(c)]{Lueck}), therefore $P$ is a free $k\G_G$-module.
\end{proof}

Now we are ready to prove an algebraic version of Theorem \ref{thm:main} for tight complexes.
Recall that a chain complex $\bC$ over $R\G_G$ is \emph{finite} if it is bounded and has the property that for each $i$, the chain module $\bC_i$ is finitely-generated as an $R\G _G$-module.  

\begin{proposition}\label{pro:main} Let $G$ be a finite $p$-group and $k$ be a field of characteristic $p$. If $\bC$ is a finite tight Moore $k\G_G$-complex, then $H_n (\bC(1))$ is an endo-permutation $kG$-module, where $n=\un{n}(1)$.
\end{proposition}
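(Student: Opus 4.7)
The plan is to realize $M := H_n(\bC(1))$ as the last term in an iterated sequence of Heller shifts relative to $G$-sets, using the permutation resolution supplied by $\bC(1)$ together with Lemmas~\ref{lem:XSplit} and~\ref{lem:HellerShift}.

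First, by Lemma~\ref{lem:projfree} each chain module $\bC_i$ is actually free, so I would write $\bC_i \cong k[Y_i^?]$ for a finite $G$-set $Y_i$; in particular $\bC_i(1) = kY_i$ is a permutation $kG$-module. Since $\bC(1)$ is acyclic outside degree $n$, breaking the resulting long exact sequence into short pieces yields
$$0 \to L_i \to kY_i \to L_{i-1} \to 0$$
for each $s < i \le n$, where $L_i := \ker \bd_i(1)$, $L_s = kY_s$, and $L_n = M$.

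The key step is to show that each of these short exact sequences is $Y_i$-split. By Lemma~\ref{lem:XSplit} it suffices to prove splitting over $kH$ for every point stabilizer $H = G_y$ with $y \in Y_i$. Since $y \in Y_i^H$, we have $\bC_i(H) = k[Y_i^H] \ne 0$, which together with tightness forces $\un{n}(H) \ge i$. Lemma~\ref{lem:main} then guarantees that the sequence $0 \to \ker \bd_i \to \bC_i \to \im \bd_i \to 0$ splits over $k\G_H$, and evaluating at the trivial subgroup provides the required $kH$-splitting.

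With $Y_i$-splitness in place, I would induct on $i$ from $s$ up to $n$. The base case $L_s = kY_s$ is a permutation module, hence endo-permutation. For the inductive step, assuming $L_{i-1}$ is endo-permutation and $Y_i \ne \emptyset$, part~(1) of Lemma~\ref{lem:HellerShift} applied to the $Y_i$-split sequence $0 \to L_i \to kY_i \to L_{i-1} \to 0$ shows $L_i$ is endo-permutation; the degenerate cases $Y_i = \emptyset$ force both $L_i$ and $L_{i-1}$ to vanish, and the induction simply restarts at the next nonzero level. Thus $M = L_n$ is endo-permutation, as claimed. The main obstacle is the $Y_i$-splitness step, which requires matching the $G$-set decomposition of the free modules $\bC_i$ against the tightness constraints on $\un{n}$; once this matching is established, iterated application of Lemma~\ref{lem:HellerShift} concludes the argument.
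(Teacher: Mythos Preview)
Your proof is correct and follows essentially the same approach as the paper: write $\bC_i \cong k[X_i^?]$ via Lemma~\ref{lem:projfree}, verify $X_i$-splitness of each short exact sequence by combining the tightness condition with Lemma~\ref{lem:main} and Lemma~\ref{lem:XSplit}, and then iterate Lemma~\ref{lem:HellerShift}(i) up the complex. Your explicit treatment of the base case and of the degenerate situation $Y_i = \emptyset$ is slightly more careful than the paper's write-up, but the argument is the same.
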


\begin{proof} By Lemma \ref{lem:projfree}, we can assume that $\bC$ is a finite chain complex of free $k\G_G$-modules with boundary maps $\bd_i : \bC_i \to \bC_{i-1}$. For each $i$, let $X_i$ denote the $G$-set such that $\bC_i \cong k[X_i ^?]$ as a $k\G_G$-module. If we evaluate $\bC$ at $1$ and augment the complex with the homology module, then we obtain an exact sequence of $RG$-modules 
$$ 0 \to H_n (\bC(1)) \to k[X_n]\to \cdots \to k[X_i] \maprt{\bd_i'} k[X_{i-1}] \to \cdots \to k[X_s] \to 0$$ where $\bd _i '=\bd _i (1)$.
To show that $H_n (\bC (1))$ is an endo-permutation module, we will inductively apply Lemma \ref{lem:HellerShift}(i)  to each of the extensions of $kG$-modules
$$ \cE _i : 0 \to \ker \bd' _i \to k[X_i] \to \im \bd' _i \to 0$$ for all $i$ such that $s \leq i \leq n$. 
If $H=G_x$ for some $x\in X_i$, then we have $i\leq \un{n} (H)$ because $\un{n}(H)$ is equal to the dimension of the chain complex $\bC(H)$ by the tightness condition. By Lemma \ref{lem:main}, the sequence
$$ 0 \to \ker \bd_i \to \bC_i \to \mathrm{im}\, \bd _i \to 0$$
splits as a sequence of $k\G _H$-modules, hence the sequence $\cE _i$  splits as a sequence of $kH$-modules. Since this is true for the isotropy subgroups $G_x$ of all the elements $x\in X_i$, by Lemma \ref{lem:XSplit} we conclude that the sequence $\cE _i$ is $X_i$-split. Hence by applying Lemma \ref{lem:HellerShift}(i) inductively, we conclude that $H_n(\bC (1))$ is an endo-permutation $kG$-module.
\end{proof}

We now give a more explicit formula for the equivalence class of the reduced homology 
$H_n (\bC (1))$ in the Dade group.
 
\begin{proposition}\label{pro:computing} Let $G$ be a finite $p$-group and $k$ be a field of characteristic $p$. Let $\bC$ be a finite tight $\un{n}$-Moore $k\G_G$-complex such that $\bC _i \cong k[X_i ^?]$ for each $i$. If $\bC$ is capped, then $H_{n} (\bC(1))$ is a capped endo-permutation $kG$-module and the formula $$[H_n (\bC(1))]=\sum _{i=m+1} ^{n} \Omega _{X_i}$$ holds in $D^{\Omega}(G)$, where $n=\un{n} (1)$ and $m=\un{n}(G)$.
\end{proposition}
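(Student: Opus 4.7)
The plan is to extend the inductive argument of Proposition \ref{pro:main} by invoking Lemma \ref{lem:HellerShift}(iii) at each step to track the equivalence class in $D(G)$. Evaluating $\bC$ at the trivial subgroup yields the exact sequence
$$0\to H_n(\bC(1))\to k[X_n]\to k[X_{n-1}]\to\cdots\to k[X_s]\to 0,$$
which breaks into short exact sequences $\cE_i:0\to Z_i\to k[X_i]\to B_{i-1}\to 0$, where $Z_i=\ker\bd_i'$ and $B_{i-1}=\im\bd_i'$. As in the proof of Proposition \ref{pro:main}, each $\cE_i$ is $X_i$-split, and the tightness of $\bC$ at $H=G$ forces $X_i^G=\emptyset$ whenever $i>m$, since $\bC(G)_i=k[X_i^G]=0$ for such $i$.

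The first step is the base case: I would show that $B_m$ is a capped permutation $kG$-module, so $[B_m]=0$ in $D(G)$. Applying Lemma \ref{lem:main} at $H=G$ with index $i=m$ (which is legitimate because $m\leq \un{n}(G)$), the sequence $0\to Z_m\to k[X_m]\to B_{m-1}\to 0$ splits as a sequence of $k\G_G$-modules; evaluation at $G/1$ then realizes $Z_m$ as a $kG$-summand of the permutation module $k[X_m]$. Since $G$ is a $p$-group and $k$ has characteristic $p$, each $k[G/H]$ is indecomposable, and Krull--Schmidt forces $Z_m$ itself to be a permutation module. The identity $B_m=Z_m$ follows from $H_m(\bC(1))=0$ (using $m<n$), and the capped hypothesis on $\bC$ yields $B_m^G=H_m(\bC(G))\neq 0$, which supplies the trivial summand in $B_m$. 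Hence $B_m$ is capped with $[B_m]=0$.

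The inductive step runs upward from $i=m+1$ to $i=n$, with hypothesis that $B_{i-1}$ is a capped endo-permutation $kG$-module whose class is $\sum_{j=m+1}^{i-1}\Omega_{X_j}$. For each such $i$, the sequence $\cE_i$ is $X_i$-split and $X_i^G=\emptyset$, so parts (i) and (ii) of Lemma \ref{lem:HellerShift} yield that $Z_i$ is again a capped endo-permutation $kG$-module, and part (iii) gives
$$[Z_i]=\Omega_{X_i}+[B_{i-1}]=\sum_{j=m+1}^{i}\Omega_{X_j}.$$
For $i<n$ the relation $B_i=Z_i$ (from $H_i(\bC(1))=0$) advances the induction, while at $i=n$ the identification $Z_n=H_n(\bC(1))$ delivers the announced formula, and the capped-ness of $H_n(\bC(1))$ is read off from the last invocation of Lemma \ref{lem:HellerShift}(ii).

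The only substantive obstacle is the base case: verifying that $B_m$ is genuinely a capped permutation module, not merely a $kG$-summand of $k[X_m]$. Both tightness (which makes Lemma \ref{lem:main} applicable at $H=G$ with $i=m$, splitting the bottom half of the complex over $k\G_G$) and the capped hypothesis on $\bC$ (which produces the trivial summand inside $B_m$) are essential at exactly this point; once $[B_m]=0$ is in hand, the remaining iteration of Lemma \ref{lem:HellerShift}(iii) is mechanical.
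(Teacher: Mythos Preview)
Your overall strategy matches the paper's proof, and the inductive step via Lemma \ref{lem:HellerShift}(i)--(iii) is correct. But your base case contains a genuine gap in the capped-ness argument for $B_m$.

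You assert $B_m^G = H_m(\bC(G))$, and then that $B_m^G \neq 0$ ``supplies the trivial summand in $B_m$''. Both steps fail. First, $B_m^G$ means the $G$-invariants of the $kG$-module $B_m = Z_m = \ker\bd_m'$, computed inside $k[X_m]$; whereas $H_m(\bC(G)) = \ker\bd_m(G)$ is computed in the complex $\bC(G)$ with chain groups $k[X_i^G]$. These are not the same subspace: $k[X_m]^G$ has dimension $|X_m/G|$, while $k[X_m^G]$ has dimension $|X_m^G|$. Second, and more damaging, the implication ``$B_m^G\neq 0 \Rightarrow k$ is a summand of $B_m$'' is false in characteristic $p$ for a $p$-group: every nonzero $kG$-module has nonzero $G$-invariants, so your argument would force every nonzero permutation module to contain a trivial summand, which $k[G/1]$ does not.

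The repair is to stay at the $k\G_G$-level for this step, as the paper does. You already know from Lemma \ref{lem:main} (applied with $H=G$) that $\ker\bd_m$ is a summand of the free $k\G_G$-module $\bC_m$, hence projective, hence free by Lemma \ref{lem:projfree}: say $\ker\bd_m \cong \bigoplus_j k[(G/H_j)^?]$. Evaluating at $G/G$ gives $(\ker\bd_m)(G) = \bigoplus_j k[(G/H_j)^G]$, and since $\bC(G)_i = 0$ for $i>m$ this equals $H_m(\bC(G))\neq 0$ by the capped hypothesis. Therefore some $H_j = G$, and evaluating at $G/1$ exhibits $k = k[G/G]$ as a summand of $Z_m$. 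Your Krull--Schmidt shortcut for showing $Z_m$ is a permutation module is a pleasant alternative to the paper's argument, but detecting the trivial summand genuinely requires the orbit-category decomposition, not just the $kG$-module structure.
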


\begin{proof}  As before we have an exact sequence of $kG$-modules  
$$ 0 \to H_n (\bC(1)) \to k[X_n]\maprt{\bd' _n} \cdots \to k[X_m] \maprt{\bd' _m} \cdots \to k[X_s] \to 0$$
where $m=\un{n}(G) \leq n=\un{n}(1)$. We claim that $Z_m= \ker \bd' _m $ is a capped permutation $kG$-module, i.e., $Z_m$ is a permutation $kG$-module that includes the trivial module $k$ as a summand. Once the claim is proved, by Lemma \ref{lem:HellerShift} (ii) and (iii), we can conclude that $H_n (\bC(1))$ is a capped endo-permutation module and the formula given above holds. 

We will show that $Z_m$ is a permutation $kG$-module such that the trivial module $k$ is one of the summands. To show this, first note that by Lemma \ref{lem:main}, the sequence $$0 \to \ker \bd_m \to \bC_m \maprt{\bd_m} \cdots \to \bC _s \to 0 $$ is a split exact sequence
of $k\G _G$-modules. This gives that $\ker \bd_m$ is a projective $k\G_G$-module. By Lemma \ref{lem:projfree}, every projective $k\G_G$-module is a free module. Hence  $\ker \bd _m \cong \oplus_i k[G/H_i ^?]$ for some subgroups $H_i \leq G$. From this we obtain that $Z_m \cong \oplus_i k[G/H_i]$, hence $Z_m$ is a permutation $kG$-module.

Note that the summands of $\bC_i$ that are of the form $k[(G/G)^?]$ form a subcomplex of $\bC$, denoted by $\bC ^G$, and we have $\bC(G)=\bC^G (G)$. Since $\bC$ is a capped Moore $R\G_G$-complex, the complex $\bC(G)$ has nontrivial homology at dimension $m$. This implies that the subcomplex $$0 \to \bC_m ^G \maprt{\bd_m^G}  \cdots \to \bC_s ^G \to 0$$ also has nontrivial homology at dimension $m$. This gives that $\ker \bd _m $ has a nontrivial summand of the form $k[(G/G)^?]$. Hence $Z_m$ includes the trivial module $k$ as a summand. 
\end{proof}

As a corollary of the results of this section, we obtain the following result.

\begin{proposition}\label{pro:Tight} Let $G$ be a finite $p$-group and $k$ be a field of characteristic $p$. Let $X$ be a finite tight $\un{n}$-Moore $G$-space over $k$, and let $X_i$ denote the $G$-set of $i$-dimensional cells in $X$ for each $i$. Then the reduced homology group $\widetilde H_{n} (X, k)$, where $n=\un{n}(1)$, is an endo-permutation $kG$-module. Moreover if $X$ is also capped, then $\widetilde H_n (X, k)$ is a capped endo-permutation $kG$-module, and the formula $$[\widetilde H_{n} (X; k)]=\sum _{i=m+1}^n \Omega _{X_i}$$ holds in the Dade group $D^{\Omega} (G)$, where $m=\un{n} (G)$.
\end{proposition}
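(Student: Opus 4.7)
The plan is to deduce the proposition directly from the algebraic results already proved in this section, namely Proposition \ref{pro:main} and Proposition \ref{pro:computing}, by applying them to the reduced cellular chain complex $\bC := \widetilde{C}_*(X^?; k)$ viewed as a chain complex of $k\G_G$-modules. Concretely, the task is to translate the topological hypotheses on $X$ into the chain-level hypotheses required by those propositions, and then to read off the conclusions.

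First I would verify that $\bC$ is a finite tight $\un{n}$-Moore $k\G_G$-complex in the sense of Definition \ref{def:MooreComplex}. By Lemma \ref{lem:projective}, $\bC$ is a chain complex of free $k\G_G$-modules, and since $X$ has only finitely many cells each $\bC_i$ is finitely generated and $\bC_i = 0$ outside a bounded range, so $\bC$ is finite. For each $H \leq G$, the evaluation $\bC(H)$ is the reduced cellular chain complex of $X^H$, whose homology is concentrated in degree $\un{n}(H)$ by the Moore hypothesis on $X$; the tightness of $X$ forces the top nonzero chain module of $\bC(H)$ to sit in degree $\un{n}(H)$ as well, which gives chain-level tightness. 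Moreover the $G$-set $X_i$ of $i$-dimensional cells of $X$ satisfies $X_i^H = (X^H)_i$, and under this identification $\bC_i \cong k[X_i^?]$ as a free $k\G_G$-module, so the $G$-sets indexing the chain modules of $\bC$ are exactly the $G$-sets $X_i$ appearing in the target formula.

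With these identifications in hand, Proposition \ref{pro:main} immediately yields that $\widetilde{H}_n(X; k) = H_n(\bC(1))$ is an endo-permutation $kG$-module, proving the first claim. For the second claim, observe that $X$ being capped means $X^G$ has nonzero reduced homology, hence $H_*(\bC(G)) \neq 0$, so $\bC$ is a capped Moore $k\G_G$-complex. Proposition \ref{pro:computing} then delivers the formula
$$[\widetilde{H}_n(X; k)] = \sum_{i=m+1}^{n} \Omega_{X_i}$$
in $D^{\Omega}(G)$, with $m = \un{n}(G)$ and $n = \un{n}(1)$.

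I expect no substantive obstacle beyond this translation, since all the real work has already been done at the chain-complex level. The one point that deserves explicit mention is the equivalence between topological and chain-level tightness: the reduced cellular chain complex of the CW-complex $X^H$ is supported in degrees $-1$ through $\dim X^H = \un{n}(H)$, and simultaneously it carries nontrivial homology at the top degree $\un{n}(H)$, which pins down the chain-complex dimension of $\bC(H)$ to be exactly $\un{n}(H)$ as required by Definition \ref{def:MooreComplex}.
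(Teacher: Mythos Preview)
Your proposal is correct and follows essentially the same approach as the paper: the paper's proof simply observes that $\bC := \widetilde C_*(X^?;k)$ is a finite complex of free $k\G_G$-modules by Lemma \ref{lem:projective} and then applies Propositions \ref{pro:main} and \ref{pro:computing}. Your version spells out in more detail why the topological hypotheses on $X$ (tightness, cappedness, and the identification $\bC_i \cong k[X_i^?]$) translate into the corresponding chain-level hypotheses, which is a welcome elaboration but not a different argument.
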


\begin{proof} By Lemma \ref{lem:projective} the reduced chain complex $\bC:=\widetilde C_* (X^?; k)$ is a finite complex of free $k\G _G$-modules. Applying Propositions \ref{pro:main} and \ref{pro:computing} to the chain complex $\bC$, we obtain the desired conclusions.
\end{proof}

\section{Proof of Theorem \ref{thm:main}}
\label{sect:ProofMain}
 
In Section \ref{sect:MooreGSpaces} we proved that the conclusion of Theorem \ref{thm:main} holds for tight Moore $G$-spaces. To extend this result to an arbitrary finite Moore $G$-space, we show that up to taking joins with representation spheres, all Moore $G$-spaces have tight chain complexes up to chain homotopy. We first start with a brief discussion of the join operation on Moore $G$-spaces.

Let $G$ be a discrete group. Given two $G$-CW-complexes $X$ and $Y$, the join $X\ast Y$ is defined as the quotient space $X\times Y \times [0,1] /\sim $ with the identifications $(x,y,1)\sim (x',y,1)$ and $(x,y,0)\sim (x, y', 0)$ for all $x,x'\in X$ and $y,y'\in Y$. The $G$-action on $X\ast Y$ is given by $g(x,y,t)=(gx, gy, t)$ for all $x\in X$, $y\in Y$, and $t\in [0,1]$. To avoid the usual problems in algebraic topology with products of topological spaces, we assume the topology on products of spaces is the compactly generated topology. Then the join $X*Y$ has a natural a $G$-CW-complex structure. 

The $G$-CW-complex structure on $X\ast Y$ can be taken as the $G$-CW-structure inherited from the union $(CX \times Y) \cup _{X\times Y} (X\times CY)$ where $CX$ and $CY$ denote the cones of $X$ and $Y$, respectively. The CW-complex structure on the products $CX\times Y$ and $X\times CY$ are the usual $G$-CW-complex structures for products that we explain below. 

Given two $G$-CW-complexes $X$ and $Y$, the $G$-CW-structure on $X\times Y$ can be described as follows: Given two orbits of cells $G/H \times e^p$ and $G/K \times e^q$ in $X$ and $Y$, with attaching maps $\varphi$ and $\psi$, in the product complex we have a disjoint union of orbits of cells $$\coprod _{HgK\in H \backslash G/K} G/(H \cap {}^g K) \times (e_p\times e_q )$$ with attaching maps $\varphi \times \psi$. Here $e_p\times e_q$ is considered as a cell with dimension $p+q$ by the usual homeomorphism $D^p\times D^q \cong D^{p+q}$.

\begin{remark}
Note that when $G$ is a compact Lie group, this construction is no longer possible. In that case we only have a $(G\times G)$-CW-complex structure on the join $X\ast Y$ and in general it may not be possible to restrict this to a $G$-CW-complex structure via diagonal map $G \to G \times G$.  For compact Lie groups Illman \cite[page 193]{Illman} proves that the join $X\ast Y$ is $G$-homotopy equivalent to a $G$-CW-complex. Also in the above construction it is possible to take $X\ast Y$ with the product topology if one of the complexes $X$ or $Y$ is a finite complex (see \cite[Lemma A.5]{Oliver-Segev}). 
\end{remark}

If $X$ and $Y$ are $G$-CW-complexes, then $(X\ast Y)^H = X^H \ast Y^H$ for every $H \leq G$. Since the join of two Moore spaces is a Moore space it is easy to show that the following holds.
 
\begin{lemma}\label{lem:DimOfJoin} If $X$ is an $\un{n}$-Moore $G$-space and $Y$ is an $\un{m}$-Moore $G$-space, then $X*Y$ is an $\un{k}$-Moore $G$-space, where $\un{k}= \un{n} +\un{m} +1$.
\end{lemma}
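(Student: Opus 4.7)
The plan is to reduce the equivariant assertion to a purely non-equivariant statement about joins of Moore spaces, then establish the non-equivariant statement by the standard suspension/Künneth argument. First, since fixed points commute with joins, we have $(X\ast Y)^H = X^H \ast Y^H$ for every $H \leq G$ (this was recorded just before the lemma). Thus, if we can show that whenever $A$ is an ordinary $a$-Moore space and $B$ is an ordinary $b$-Moore space (both over $R$, with the convention $\widetilde H_{-1}(\emptyset;R)=R$ so that the empty set counts as a $(-1)$-Moore space), the join $A\ast B$ has reduced homology concentrated in degree $a+b+1$, then applied fiberwise with $a=\un n(H)$ and $b=\un m(H)$ this gives that $(X\ast Y)^H$ is an $\un{k}(H)$-Moore space for the prescribed $\un k=\un n+\un m+1$.

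For the non-equivariant claim, I would invoke the standard homotopy equivalence $A\ast B\simeq \Sigma(A\wedge B)$ valid for CW-complexes, which yields a suspension isomorphism
\[
\widetilde H_i(A\ast B;R)\;\cong\;\widetilde H_{i-1}(A\wedge B;R).
\]
Combined with the Künneth formula for the smash product,
\[
\widetilde H_{i-1}(A\wedge B;R)\;\cong\;\bigoplus_{p+q=i-1}\widetilde H_p(A;R)\otimes_R\widetilde H_q(B;R)\;\oplus\;\bigoplus_{p+q=i-2}\mathrm{Tor}_1^R\bigl(\widetilde H_p(A;R),\widetilde H_q(B;R)\bigr),
\]
one sees that the only summand that can be nonzero is the tensor term with $(p,q)=(a,b)$, giving $i=a+b+1$; the Tor summands all vanish because at most one of the two arguments is nonzero in any given bidegree. (Equivalently, one could avoid $A\wedge B$ altogether and argue directly on the cellular chain level using the well-known isomorphism $\widetilde C_*(A\ast B)\cong \widetilde C_*(A)\otimes \widetilde C_*(B)$ shifted by one in degree, which reduces the claim to the Künneth theorem for chain complexes.)

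Finally, I would check the boundary cases where one of the fixed point sets is empty, to make sure the bookkeeping with $\un{n}(H)=-1$ is coherent. If $X^H=\emptyset$, then $(X\ast Y)^H=\emptyset\ast Y^H=Y^H$, which is an $\un m(H)$-Moore space; on the other hand $\un k(H)=-1+\un m(H)+1=\un m(H)$, so the formula matches. The case $Y^H=\emptyset$ is symmetric, and if both are empty then so is $(X\ast Y)^H$, with $\un k(H)=-1+(-1)+1=-1$, again as required. I do not anticipate any serious obstacle; the only mildly delicate point is making sure the homotopy equivalence $A\ast B\simeq \Sigma(A\wedge B)$ is applied in a setting where it is valid (i.e., at the level of CW-complexes, or after a standard cofibrant replacement), but this is automatic here because all the spaces in sight are $G$-CW-complexes and hence, restricted to fixed points, ordinary CW-complexes.
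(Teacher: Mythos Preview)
Your approach matches the paper's: reduce to fixed points via $(X\ast Y)^H=X^H\ast Y^H$ and then invoke the standard homology computation for a join. However, your justification for the vanishing of the Tor summands is incorrect. You write that ``at most one of the two arguments is nonzero in any given bidegree,'' but this is false: at the single bidegree $(p,q)=(a,b)$ \emph{both} $\widetilde H_a(A;R)$ and $\widetilde H_b(B;R)$ are (potentially) nonzero, so $\mathrm{Tor}_1^R\bigl(\widetilde H_a(A;R),\widetilde H_b(B;R)\bigr)$ can contribute to $\widetilde H_{a+b+2}(A\ast B;R)$. Over a general ring $R$ the lemma as stated can therefore fail (e.g.\ take $R=\bbZ$ and let $A,B$ be Moore spaces with $\widetilde H_1\cong\bbZ/2$; then $A\ast B$ has nonzero homology in two degrees). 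In the paper's intended setting $R=k$ is a field, so $\mathrm{Tor}_1^k$ vanishes identically and your K\"unneth argument goes through; just replace your stated reason with this one. The chain-level alternative you mention has exactly the same issue and the same fix.
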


\begin{proof} This follows from the usual calculation of homology of join of two spaces and from the above observation on the fixed point subspaces of joins.
\end{proof}

Since it is more desirable to have a dimension function which is additive over the join operation, we define the dimension function for a Moore $G$-space in the following way. 

\begin{definition}\label{def:Dimension} For an $\un{n}$-Moore $G$-space $X$ over $R$, we define the dimension function $\Dim X : \Sub G \to \bbZ$ to be the super class function with values $$(\Dim X) (H)=\un{n} (H) +1$$ for all $H \leq G$.
\end{definition}

By Lemma \ref{lem:DimOfJoin} we have $\Dim (X*Y)=\Dim X + \Dim Y$.
We will take join of a given Moore $G$-space with a homotopy representation. A \emph{homotopy representation} of a finite group $G$ is defined as a $G$-CW-complex $X$ with the property that for each $H \leq G$, the fixed point set $X^H$ is a homotopy equivalent to an $\un{n}(H)$-sphere, where $\un{n} (H)=\dim X^H$. Given a real representation $V$ of $G$, the unit sphere $S(V)$ can be triangulated as a finite $G$-CW-complex and for every $H \leq G$, the fixed point set $S(V)^H=S(V^H)$, so $S(V)$ is a finite homotopy representation with dimension function with values $[\Dim S(V)] (H)=\dim_{\bbR} V^H$. If $X$ is an $\un{n}$-Moore $G$-space, then the join $X\ast S(V)$ is an $\un{m}$-Moore $G$-space, where $\un{m}$ satisfies $\un{m} (H)=\un{n} (H) +\dim _{\bbR} V^H$ for every $H \leq G$. 

\begin{definition} A super class function $f \colon \Sub (G) \to \bbZ$ is called \emph{monotone} if $f(K) \geq f(H)$ for every $K \leq H$. We say $f$ is \emph{strictly monotone} if $f(K) > f(H)$ for every $K<H$.
\end{definition}

We prove the following.

\begin{lemma}\label{lem:strmonotone} Let $X$ be a Moore $G$-space over $R$. Then there is a real $G$-representation $V$ such that $Y=X\ast S(V)$ is a Moore $G$-space with a strictly monotone dimension function and the reduced homologies of $X$ and $Y$ over $R$ are isomorphic.     
\end{lemma}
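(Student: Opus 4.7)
The plan is to take $V$ to be a sufficiently large multiple of a fixed orientable $G$-representation $V_0$.

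First I would choose $V_0 = \bbR G \oplus \bbR G$, viewed as the underlying real representation of $\bbC G$. This choice has two useful features. It admits a $G$-invariant complex structure (so the $G$-action factors through $SO(V_0)$ and in particular acts trivially on the top reduced homology of $S(V_0)$ over any coefficient ring). Moreover $\dim_{\bbR} V_0^H = 2[G:H]$ for every subgroup $H \leq G$, so that $\dim V_0^K - \dim V_0^H \geq 2$ whenever $K<H$.

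Since $G$ has finitely many subgroups, the integer $C := \max\{\un{n}(H) - \un{n}(K) \mid K < H \leq G\}$ is finite. I would choose an integer $m > C/2$ and set $V := m V_0$. Then $V$ is still orientable as a $G$-representation, and $\dim V^H = 2m[G:H]$.

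Set $Y := X*S(V)$. By the discussion preceding the lemma, $Y$ is a Moore $G$-space whose Moore function satisfies $\un{m}(H) = \un{n}(H) + 2m[G:H]$, and so its dimension function is $\Dim Y(H) = \un{n}(H) + 2m[G:H] + 1$. For any proper inclusion $K < H$ one computes
$$\Dim Y(K) - \Dim Y(H) = (\un{n}(K) - \un{n}(H)) + 2m([G:K] - [G:H]) \geq -C + 2m > 0,$$
which gives strict monotonicity.

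For the homology claim, I would combine the homotopy equivalence $X*S(V) \simeq \Sigma(X \wedge S(V))$ with the Künneth theorem. Since $\widetilde H_*(S(V); R) \cong R$ is concentrated in the single degree $\dim V - 1$ and is $R$-flat, no Tor terms appear, and one obtains a natural isomorphism $\widetilde H_{n + \dim V}(Y; R) \cong \widetilde H_n(X; R)$ of $R$-modules in the appropriate degrees. The step I expect to require the most care is upgrading this to an isomorphism of $RG$-modules: this is precisely where orientability of $V$ is used, since it guarantees that a generator of $\widetilde H_{\dim V - 1}(S(V); R)$ is fixed by $G$, so that the Künneth isomorphism is $G$-equivariant. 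Without this, the sign character $\det\colon G \to \{\pm 1\}$ would twist the $G$-action on $\widetilde H_*(Y;R)$ and the conclusion could fail, for instance, when $R$ has residue characteristic different from $2$.
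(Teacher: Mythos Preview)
Your proof is correct and follows essentially the same approach as the paper: the paper also takes $V$ to be $2s$ copies of $\bbR G$ for $s$ large enough, and then uses that the reduced homology of $S(V)$ is $R$ with trivial $G$-action to conclude. Your version simply makes explicit the bound on $s$ and the orientability argument behind the $G$-equivariance of the homology isomorphism, which the paper leaves implicit.
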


\begin{proof} Let $s$ be a positive integer and $V$ be $2s$ copies of the regular representation $\bbR G$. Then for each $H \leq G$, we have $\dim_{\bbR} V^H=s |G:H|$. If we choose $s$ large enough, then the dimension function of $Y=X\ast S(V)$ will be strictly monotone. Since the reduced homology of $S(V)$ is isomorphic to $R$ with trivial $G$-action, the reduced homology of $X$ and $Y$ over $R$ are isomorphic.     
\end{proof}

\begin{proposition}\label{pro:ChainHomEq} Let $\bC$ be a finite Moore $R \G_G$-complex of free $R\G_G$-modules. If the dimension function of $\bC$ is strictly monotone, then $\bC$ is chain  homotopy equivalent to a tight Moore $R\G_G$-complex $\bD$ such that $\bD$ is a finite chain complex of free $R\G_G$-modules.   
\end{proposition}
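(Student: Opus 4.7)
The plan is to induct on the excess
\[
e(\bC) := \sum_{(H)} \bigl(\dim \bC(H) - \un{n}(H)\bigr),
\]
summed over conjugacy classes of subgroups $H\leq G$, using Lemma~\ref{lem:projfree} throughout to identify projective and free $k\G_G$-modules. When $e(\bC)=0$ the complex is already tight, and we set $\bD=\bC$. For the inductive step, with $e(\bC)>0$, pick a subgroup $H$ that is \emph{maximal} among those satisfying $\dim \bC(H) > \un{n}(H)$, and set $d=\dim \bC(H)$. Writing $\bC_i \cong k[Y_i^?]$, I first prove a rigidity statement: every $y\in Y_d$ (resp.\ $z\in Y_{d-1}$) with $G_y\supseteq H$ (resp.\ $G_z\supseteq H$) in fact satisfies $G_y=H$ (resp.\ $G_z=H$). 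Indeed, a proper overgroup $K>H$ would satisfy $\un{n}(K)<\un{n}(H)\leq d-1$ by strict monotonicity, hence $\un{n}(K)\leq d-2$; yet $Y_i^K\neq\emptyset$ for $i\in\{d-1,d\}$ forces $\dim\bC(K)\geq d-1$, contradicting $\dim\bC(K)=\un{n}(K)$ (which holds because $K$ is not bad, by maximality of $H$).

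This rigidity forces the orbit-type-$(H)$ summand $\bC_d^{(H)} \cong k[(G/H)^?]^m$ of $\bC_d$ to be sent by $\bd_d$ into the orbit-type-$(H)$ summand $\bC_{d-1}^{(H)} \cong k[(G/H)^?]^{m'}$ of $\bC_{d-1}$. Evaluating at $H$ yields a map $k[W_G H]^m \to k[W_G H]^{m'}$ of \emph{free} $k[W_G H]$-modules (where $W_G H = N_G(H)/H$); it is injective because $H_d(\bC(H))=0$ by the Moore property combined with $\un{n}(H)<d$. Since $W_G H$ is a $p$-group and $\mathrm{char}\,k=p$, the group algebra $k[W_G H]$ is self-injective, so free $k[W_G H]$-modules are injective and this injection admits a $k[W_G H]$-linear splitting. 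Via the Yoneda identification $\End_{k\G_G}\bigl(k[(G/H)^?]\bigr)\cong k[W_G H]$, this splitting lifts to a genuine $k\G_G$-module splitting $s\colon \bC_{d-1}^{(H)}\to \bC_d^{(H)}$ of $\bd_d|_{\bC_d^{(H)}}$.

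Using $s$, the chain complex $\bC$ decomposes as $\bC=E\oplus \bC'$, where $E$ is the two-term contractible subcomplex $\bC_d^{(H)}\xrightarrow{\bd_d}\bd_d(\bC_d^{(H)})$ in degrees $d,d-1$, and $\bC'$ has $\bC'_i=\bC_i$ for $i<d-1$, $\bC'_{d-1}$ a complement of $\bd_d(\bC_d^{(H)})$ in $\bC_{d-1}$, and $\bC'_d=\{x\in\bC_d \mid \bd_d(x)\in\bC'_{d-1}\}$; each $\bC'_i$ is a summand of a free $k\G_G$-module, hence free by Lemma~\ref{lem:projfree}. Contractibility of $E$ gives $\bC'\simeq \bC$. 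The rigidity ensures $\bC'_d(H)=0$, so $\dim\bC'(H)<d$ and the excess at $H$ strictly decreases; at every other subgroup only cells are removed, so the excess cannot increase. Iterating produces the desired tight $\bD$. The main obstacle is the splitting step, which succeeds precisely because the rigidity reduces the global $k\G_G$-splitting problem to the single subgroup $H$, at which the Frobenius algebra $k[W_G H]$ automatically splits all injections of free modules.
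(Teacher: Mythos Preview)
Your argument is correct over $k$ (with $G$ a $p$-group), and it is considerably more explicit than what the paper does: the paper simply invokes \cite[Proposition~8.7 and Corollary~8.8]{HPY} for the inductive step, whereas you carry out the reduction by hand. The underlying idea is the same---peel off contractible free summands from the top of the complex until it becomes tight---but you make the mechanism transparent: the strict monotonicity of $\un{n}$ together with the maximality of $H$ forces the top two layers at $H$ to consist entirely of orbit type $(H)$, which (via Yoneda) reduces the splitting problem to a single Frobenius algebra $k[W_GH]$. One point worth flagging: the proposition is stated over a general coefficient ring $R$, but your proof genuinely needs $R=k$ in two places---Lemma~\ref{lem:projfree} (projective $=$ free) and the self-injectivity of $k[W_GH]$ used to split the top boundary map. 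The paper's citation to \cite{HPY} presumably covers general $R$, and indeed the paper's own proof has a stray ``$k\G_G$'' in its last line, so the discrepancy may be a typo on the paper's side; in any case the proposition is only applied with $R=k$ later on, so your restriction is harmless for the intended use. A minor bookkeeping remark: your excess $e(\bC)$ could in principle have negative summands if some $\bC(K)$ is acyclic with $\dim\bC(K)<\un{n}(K)$; it is cleaner to induct on $\sum_{(H)}\max\bigl(0,\dim\bC(H)-\un{n}(H)\bigr)$, which your step still strictly decreases.
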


\begin{proof} By applying \cite[Proposition 8.7]{HPY} inductively, as it is done in \cite[Corollary 8.8]{HPY}, we obtain that $\bC$ is chain homotopy equivalent to a tight complex $\bD$. It is clear from the construction that $\bD$ is a finite chain complex of free $k\G _G$-modules.
\end{proof}

Now we are ready to complete the proof of Theorem \ref{thm:main}.

\begin{proof}[Proof of Theorem \ref{thm:main}]
Let $X$ be a finite $\un{n}$-Moore $G$-space over $k$. By Lemma \ref{lem:strmonotone}, we can assume that the function $\un{n}$ is strictly monotone.  Let $\bC=C_* (X^?; k)$ denote the chain complex of $X$ over the orbit category $\G_G=\Or G$. By Proposition \ref{pro:ChainHomEq}, $\bC$ is chain homotopy equivalent to a tight Moore $k\G_G$-complex $\bD$ such that $\bD$ is a finite chain complex of free $k\G _G$-modules. Applying Proposition \ref{pro:main} to the chain complex $\bD$, we conclude that the $n$-th homology of $\bD$, and hence the $n$-th homology of $\bC$, is an endo-permutation $kG$-module generated by relative syzygies.
\end{proof}

\begin{example} The conclusion of Theorem \ref{thm:main} does not hold for a Moore $G$-space $X$ if the fixed point subspace $X^H$ is not a Moore space for some $H\leq G$. One can easily construct examples of Moore $G$-spaces where this happens using the following general construction: Given a Moore $G$-space we can assume $X^G\neq \emptyset$ by replacing $X$ with the suspension $\Sigma X$ of $X$. Given two Moore $G$-spaces $X_1$ and $X_2$ with nontrivial $G$-fixed points, we can take a wedge of these spaces on a fixed point. So, given two Moore $G$-spaces of types $(M_1, n_1)$ and $(M_2, n_2)$, using suspensions and taking a wedge, we can obtain a Moore $G$-space of type $(M_1\oplus M_2, n)$, where $n=lcm(n_1, n_2)$. The direct sum of two endo-permutation $kG$-modules $M_1\oplus M_2$ is not an endo-permutation $kG$-module unless $M_1$ and $M_2$ are compatible. To give an explicit example, we can take $G=C_3\times C_3$ and let $X_1=G/H_1$ and $X_2=G/H_2$ where $H_1$ and $H_2$ are two distinct subgroups in $G$ of index $3$. Then $X=\Sigma X_1 \vee \Sigma X_2$ is a one-dimensional Moore $G$-space with reduced homology $\Delta (X_1)\oplus \Delta (X_2)$, where $\Delta (X_i)=\ker \{kX_i \to k\}$. This module is not an endo-permutation module because $\Delta (X_1)\otimes _k \Delta (X_2)^*$ is not a permutation $kG$-module. One can see this easily by restricting this tensor product to $H_1$ or $H_2$.
\end{example}

Using the same idea, we can construct some other interesting examples of Moore $G$-spaces. 

\begin{example}
The formula in Proposition \ref{pro:Tight} does not hold for an arbitrary $\underline{n}$-Moore $G$-space, it only holds for tight $\un{n}$-Moore $G$-spaces. 
To see this let $G=C_3$, and let $X_1=G/1$ as a $G$-set. Let $X_2 $ be a 1-simplex with a trivial $G$-action on it. Then $X=\Sigma X_1 \vee X_2$ is not a tight complex since $X^G$ is one dimensional, but it is homotopy equivalent to $S^0$.  We can give a $G$-CW-structure to $X$ in such a way that the chain complex for $X$ is of the form
$$ 0 \to k[G/G] \oplus k[G/1] \to \oplus _3 k[G/G]\to 0.$$ Then for this complex, the sum $\sum _{m+1} ^n \Omega _{X_i}$ is zero but the reduced homology module is $\Delta (X_1)$, whose equivalence class in $D^{\Omega } (G)$ is $\Omega _{G/1}$, which is nonzero.
\end{example}

\section{The group of Moore $G$-spaces}
\label{sect:GroupGMoore}

In this section, we define the group of finite Moore $G$-spaces $\cM(G)$ and relate it to the group of Borel-Smith functions $C_b(G)$ and to the Dade group generated by relative syzygies $D^{\Omega} (G)$. 

\begin{definition}\label{defn:equivalence} We say two Moore $G$-spaces $X$ and $Y$ are \emph{equivalent}, denoted by $X \sim Y$, if  $X$ and $Y$ are $G$-homotopy equivalent. By Whitehead's theorem for $G$-complexes, $X$ and $Y$ are $G$-homotopy equivalent if and only if there is a $G$-map $f:X\to Y$ such that for every $H \leq G$, the map on fixed point subspaces $f^H : X^H \to Y^H$ is a homotopy equivalence (see \cite[Corollary II.5.5]{Bredon} or \cite[Proposition II.2.7]{tDieck-BlueBook}). We denote the equivalence class of a Moore $G$-space $X$ by $[X]$.
\end{definition}

It is easy to show that if $X \sim X'$ and $Y\sim Y'$, then $X* Y \sim X' * Y'$. Hence the join operation defines an addition of the equivalence classes of Moore $G$-spaces given by $[X]+[Y]=[X*Y]$. The set of equivalence classes of Moore $G$-spaces with this addition operation is a commutative monoid and we can apply the Grothendieck construction to this monoid to define the group of Moore $G$-spaces.

\begin{definition}\label{defn:GroupMoore}
Let $G$ be a finite $p$-group and $k$ be a field of characteristic $p$. The \emph{group of finite Moore $G$-spaces} $\cM(G)$ is defined as the Grothendieck group of $G$-homotopy classes of finite, capped Moore $G$-spaces with addition given by $[X]+[Y]=[X*Y]$. 
\end{definition}

Since we are only interested in the group of finite Moore spaces, from now on we will assume all Moore $G$-spaces are finite $G$-CW-complexes. Note that every element of $\cM(G)$ is a virtual Moore $G$-space $[X]-[Y]$, and that two such virtual Moore $G$-spaces $[X_1]-[Y_1]$ and $[X_2]-[Y_2]$ are equal in $\cM (G)$ if there is a Moore $G$-space $Z$ such that $X_1*Y_2*Z$ is $G$-homotopy equivalent to $X_2*Y_1*Z$. In particular, for all Moore $G$-spaces $X$ and $Y$ and every real representation $V$, we have $[X]-[Y]=[X*S(V)]-[Y*S(V)]$. Using this we can prove the following:
  
\begin{lemma}\label{lem:StrictlyMonotone}
Every element in $\cM(G)$ can be expressed as $[X]-[Y]$ where $X$ and $Y$ are Moore $G$-spaces over $k$ with strictly monotone dimension functions.  
\end{lemma}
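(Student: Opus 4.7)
The plan is to start from an arbitrary representative $[A]-[B]$ of an element in $\cM(G)$, where $A$ and $B$ are finite capped Moore $G$-spaces, and to exhibit a single real $G$-representation $V$ such that both $A\ast S(V)$ and $B\ast S(V)$ are capped Moore $G$-spaces with strictly monotone dimension functions. Combined with the identity
$$[A]-[B]=[A\ast S(V)]-[B\ast S(V)]$$
stated in the paragraph preceding the lemma, this will give the desired representative $X=A\ast S(V)$, $Y=B\ast S(V)$.

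The construction of $V$ is the one used in Lemma \ref{lem:strmonotone}, but applied uniformly to the pair $(A,B)$. Take $V=2s\cdot\bbR G$ for a positive integer $s$ to be chosen. By Lemma \ref{lem:DimOfJoin} and Definition \ref{def:Dimension},
$$\Dim(A\ast S(V))=\Dim A+\Dim S(V),\qquad \Dim S(V)(H)=2s|G:H|,$$
and similarly for $B$. For $K<H$ in the $p$-group $G$ one has $|G:K|-|G:H|=(|H:K|-1)|G:H|\geq 1$, so
$$\Dim S(V)(K)-\Dim S(V)(H)\geq 2s.$$
Consequently, $A\ast S(V)$ has a strictly monotone dimension function as soon as
$$2s>\max_{K<H\leq G}\bigl(\Dim A(H)-\Dim A(K)\bigr),$$
and likewise for $B$. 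Choosing $s$ large enough to dominate both maxima simultaneously (which is possible because $A$ and $B$ are finite complexes and therefore have bounded dimension functions) handles the monotonicity requirement for both spaces.

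It remains to check that cappedness is preserved, i.e.\ that $(A\ast S(V))^G$ and $(B\ast S(V))^G$ still have nonzero reduced homology. Since $(A\ast S(V))^G=A^G\ast S(V^G)$ and $V^G$ is $2s$-dimensional, $S(V^G)$ is a genuine sphere with nonzero reduced homology; the reduced homology of the join is (up to a dimension shift) the tensor product of the reduced homologies of the factors, so if $A^G$ has nonzero reduced homology then so does $A^G\ast S(V^G)$, and similarly for $B$. Thus $A\ast S(V)$ and $B\ast S(V)$ are both capped, strictly monotone, finite Moore $G$-spaces, completing the argument. I do not foresee a serious obstacle here: the only subtle point is the uniform choice of $s$ that works for both $A$ and $B$ at once, which is handled by finiteness of the two dimension functions.
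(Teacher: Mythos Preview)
Your proof is correct and follows essentially the same approach as the paper: take $V$ to be $2s$ copies of the regular representation as in Lemma~\ref{lem:strmonotone}, choose $s$ large enough to work for both factors simultaneously, and use the identity $[A]-[B]=[A\ast S(V)]-[B\ast S(V)]$. You have simply made explicit the uniform choice of $s$ and the preservation of cappedness that the paper's one-line proof leaves to the reader.
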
 

\begin{proof} Let $[X]-[Y] \in \cM(G)$. Using the argument in the proof of Lemma \ref{lem:strmonotone}, it is easy to see that there is a real representation $V$ such that both $X\ast S(V)$ and $Y\ast S(V)$ have strictly monotone dimension functions. Since $[X]-[Y]=[X\ast S(V)]-[Y\ast S(V)]$, we obtain the desired conclusion.
\end{proof}

Recall that in Definition \ref{def:Dimension}, we defined the dimension function of an $\un{n}$-Moore $G$-space $X$ as the super class function $\Dim X: \Sub (G) \to \bbZ$ satisfying $(\Dim X) (H)=\un{n} (H) +1$ for every $H \leq G$. Note that if $X$ is a tight $\un{n}$-Moore $G$-space, then $\un{n}$ coincides with the geometric dimension function, but in general $\un{n}$ is actually the homological dimension function, giving the homological dimension of fixed point subspaces. Hence $\Dim X$ is well-defined on the equivalence classes of Moore $G$-spaces.

Let $C(G)$ denote the group of all super class functions $f: \Sub (G) \to \bbZ$. Note that $C(G)$ is a free abelian group with rank equal to the number of conjugacy classes of subgroups in $G$. 

\begin{definition}\label{def:DimHom} The assignment $[X] \to \Dim X$ can be extended linearly to obtain a group homomorphism
$$\Dim :  \cM (G) \to C(G).$$
We call the homomorphism $\Dim$ the \emph{dimension homomorphism}.
\end{definition}
 
If $X$ is a finite $G$-set such that $X^G \neq pt$, then  $X$ is a capped Moore $G$-space as a discrete $G$-CW-complex. In this case $\Dim X=\omega _X$ where $\omega_X$ is the element of $C(G)$ defined by
$$ \omega _X (K) =\begin{cases} 1 & \text{if $X^K \neq \emptyset$} \\ 0 & \text{otherwise} \end{cases} $$ 
for every $K \leq G$. Let $\{ e_H \}$ denote the idempotent basis for $C(G)$ defined by $e_H (K)= 1$ if $H$ and $K$ are conjugate in $G$ and zero otherwise. Note that for every $H \leq G$, we have $\omega_{G/H}= \sum _{K \leq _G H } e_K $, where the sum is over all $K$ such that $K^g \leq H$ for some $g\in G$. Since the transition matrix between $\{\omega_{G/H}\}$ and $\{ e_K\}$ is an upper triangular matrix with 1's on the diagonal, the set $\{ \omega _{G/H} \}$, over the set of conjugacy classes of subgroups $H \leq G$, is a basis for $C(G)$. We conclude the following.
 
\begin{lemma}\label{lem:basis} The set of super class functions $\{\omega _{G/H}\}$ over all transitive $G$-sets $G/H$ is a basis for $C(G)$. Moreover, for every $H \leq G$, we have $e_H = \sum _{K \leq _G H} \mu _G (K, H) \omega _{G/K}$ where $\mu _G (K, H)$ denotes the M\" obius function of the poset of conjugacy classes of subgroups of $G$.  
\end{lemma}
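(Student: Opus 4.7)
The plan is to identify $\omega_{G/H}$ explicitly in terms of the idempotent basis $\{e_K\}$ and then invoke M\"obius inversion on the poset of conjugacy classes of subgroups. Everything needed has essentially been set up in the paragraph preceding the lemma; the task is to formalize it.

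First I would recall that for a transitive $G$-set $G/H$ and a subgroup $K\leq G$, the fixed point set $(G/H)^K$ is non-empty if and only if $K$ is subconjugate to $H$ in $G$, i.e.\ $K\leq_G H$. Consequently, by the definition of $\omega_{G/H}$, we have
$$\omega_{G/H}=\sum_{K\leq_G H} e_K,$$
where the sum runs over conjugacy classes of subgroups $K$ with $K\leq_G H$. This is precisely the first observation used before the lemma is stated.

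Next, I would order the conjugacy classes of subgroups of $G$ by the subconjugacy relation $\leq_G$ (which is a partial order, since $G$ is finite). With respect to this ordering, the expression above shows that the transition matrix from the family $\{\omega_{G/H}\}$ to the basis $\{e_K\}$ is upper triangular with $1$'s on the diagonal; in particular it is invertible over $\bbZ$. Since $C(G)$ is the free abelian group on $\{e_K\}$ indexed by conjugacy classes of subgroups, it follows that $\{\omega_{G/H}\}$ (also indexed by conjugacy classes) is a $\bbZ$-basis of $C(G)$, proving the first assertion.

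For the explicit inversion formula, I would apply the M\"obius inversion formula for the finite poset of conjugacy classes of subgroups of $G$ ordered by $\leq_G$: writing $\omega_{G/H}=\sum_{K\leq_G H} e_K$ as a sum over this poset and inverting, one gets
$$e_H=\sum_{K\leq_G H}\mu_G(K,H)\,\omega_{G/K},$$
where $\mu_G$ is the M\"obius function of this poset. Since the only real content is the identification of $(G/H)^K$ with the subconjugacy condition, there is no substantial obstacle here; the only point requiring a little care is to state clearly that the M\"obius function is taken on the poset of \emph{conjugacy classes} of subgroups rather than on the full subgroup lattice.
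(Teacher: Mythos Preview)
Your proof is correct and is precisely the argument sketched in the paragraph preceding the lemma; the paper itself does not give a self-contained proof but simply refers to \cite[Lemma 2.2]{Bouc-Aremark}. In effect you have supplied the details the paper leaves to the reference, so there is nothing to add.
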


\begin{proof} See \cite[Lemma 2.2]{Bouc-Aremark}.  
\end{proof}

The following is immediate from this lemma.

\begin{proposition}\label{pro:DimSurj} The dimension homomorphism $\Dim : \cM (G) \to C(G)$ that takes $[X]$ to its dimension function $\Dim X$ is surjective.
\end{proposition}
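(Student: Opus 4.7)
The plan is to invoke Lemma \ref{lem:basis}, which identifies $\{\omega_{G/H}\}$ (indexed by conjugacy classes of subgroups of $G$) as a $\bbZ$-basis of $C(G)$, and then to realize each basis element as the dimension function of a capped Moore $G$-space. Since $\Dim$ is a group homomorphism, this reduces surjectivity to exhibiting, for each conjugacy class of subgroups $H \leq G$, a capped Moore $G$-space $X$ with $\Dim X = \omega_{G/H}$.

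For $H < G$, I would take $X = G/H$ with its discrete $G$-CW-complex structure. Each fixed point subspace $(G/H)^K$ is either empty or a nonempty discrete set, so its reduced homology is concentrated in dimension $-1$ or $0$ respectively, which makes $X$ a Moore $G$-space. Because $(G/H)^G = \emptyset$ has nonzero reduced $(-1)$-homology, $X$ is capped, and the identification $\Dim X = \omega_{G/H}$ is already recorded in the paragraph preceding the proposition.

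The only case requiring care is $H = G$, since $G/G$ consists of a single $G$-fixed point and is therefore \emph{not} capped. I would handle it by doubling the fixed point: take $X = S^{0}$ equipped with the trivial $G$-action, equivalently the disjoint union of two copies of $G/G$. Then $X^K = S^0$ for every $K \leq G$, so $\widetilde H_0(X^K;k) = k$ and $X$ is a $0$-Moore $G$-space with $\Dim X$ the constant function $1 = \omega_{G/G}$; cappedness follows from $|X^G| = 2 \neq 1$. The only genuine subtlety of the whole argument is precisely this boundary case, where one must replace $G/G$ by its duplicate in order to meet the capped condition while still realizing the constant super class function $1$. Once both cases are settled, every basis element of $C(G)$ lies in the image of $\Dim$, and surjectivity follows.
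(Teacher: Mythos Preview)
Your proof is correct and matches the paper's argument essentially verbatim: the paper also invokes Lemma~\ref{lem:basis}, takes $X=G/H$ for $H\neq G$ and $X=G/G\coprod G/G$ for $H=G$, and concludes. Your identification of the $H=G$ case as the one genuine subtlety is exactly right.
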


\begin{proof} 
For $H\leq G$, let $X=G/H$ if $H\neq G$, and $X=G/G \coprod G/G$ if $H=G$. Then, $X$ is a capped Moore $G$-space and  $\dim X=\omega_{G/H}$. Hence by Lemma \ref{lem:basis}, the map $\Dim : \cM(G) \to C(G)$ is surjective.
\end{proof}

Now consider the group homomorphism  $\Hn: \cM(G) \to D^{\Omega} (G)$, defined as the linear extension of the assignment which takes an isomorphism class $[X]$ of an $\un{n}$-Moore $G$-space $X$ to the equivalence class of the $n$-th reduced homology $[\widetilde H_n (X, k)]$ in $D^{\Omega} (G)$, where $n=\un{n} (1)$. This extension is possible because the assignment $[X] \to \Hn ([X])$ is additive. Note that if $[X_1]-[Y_1]=[X_2]-[Y_2]$, then there is a Moore $G$-space $Z$ such that $$X_1*Y_2*Z\cong X_2*Y_1*Z.$$ This gives that $\Hn ([X_1])-\Hn([Y_1])=\Hn ([X_2])-\Hn ([Y_2])$ in $D^{\Omega}(G)$. So, $\Hn$ is a well-defined homomorphism.  

There is a third homomorphism $\Psi : C(G) \to D^{\Omega}(G)$ which can be uniquely defined as the group homomorphism which takes $\omega _{G/H}$ to $\Omega _{G/H}$.   In \cite[Theorem 1.7]{Bouc-Aremark}, it is also proved that $\Psi$ takes $\omega _X$ to $\Omega _X$ for every $G$-set $X$, by showing that the relations satisfied by $\omega _{X}$ also hold for $\Omega_{X}$. Now we state the main theorem of this section:

\begin{proposition}\label{pro:DependsOnly} Let $\Psi : C(G) \to D^{\Omega} (G)$ denote the homomorphism defined by $\Psi (\omega _{G/H})=\Omega _{G/H}$, and let $\Hn : \cM (G) \to D^{\Omega } (G)$ be the homomorphism which takes the equivalence class of a finite, capped Moore $G$-space $[X]$ to the equivalence class $[H_n (X; k)]$ in $D^{\Omega}(G)$. Then $$\Hn= \Psi \circ \Dim.$$ In particular, if $X$ is a finite, capped $\un{n}$-Moore $G$-space, then the equivalence class $[H_n (X; k)]$ in $D^{\Omega } (G)$ depends only on the function $\un{n}$.
\end{proposition}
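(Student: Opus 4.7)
Both $\Hn$ and $\Psi\circ\Dim$ are additive under the join operation, hence are group homomorphisms $\cM(G)\to D^{\Omega}(G)$, so it suffices to verify $\Hn([X])=\Psi(\Dim X)$ on the equivalence class of each capped finite Moore $G$-space $X$. By Lemma \ref{lem:strmonotone} we may replace $X$ by $X\ast S(V)$ for $V$ a sufficiently large multiple of the regular representation $\bbR G$, reducing to the case that $\Dim X$ is strictly monotone; the general case then follows by applying the identity separately to the two strictly monotone capped Moore $G$-spaces $X\ast S(V)$ and $S(V)$, and subtracting. Set $\bC=\widetilde C_*(X^?;k)$; by Proposition \ref{pro:ChainHomEq}, $\bC$ is chain homotopy equivalent to a finite tight Moore $k\G_G$-complex $\bD$ of free $k\G_G$-modules with $\bD_i\cong k[X_i^?]$.

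The key technical step is to modify $\bD$, without changing its chain homotopy type, to a tight complex $\bD'$ satisfying a ``no gaps'' condition: $\bD'(H)_i\neq 0$ for every $H\leq G$ and every $i\in[m+1,\un{n}(H)]$, where $m=\un{n}(G)$. We do this iteratively: for each offending pair $(H,i)$, adjoin a free summand $k[(G/H)^?]$ to both $\bD_i$ and $\bD_{i-1}$, with differential equal to the identity on this summand. The added piece is a contractible free subcomplex, so chain homotopy type and the Moore property are preserved; tightness is preserved because $(G/H)^K\neq\emptyset$ forces $K\leq_G H$, and then monotonicity of $\un{n}$ gives $\un{n}(K)\geq\un{n}(H)\geq i$, so the top nonvanishing dimension of $\bD'(K)$ remains $\un{n}(K)$. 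Since adding such generators never destroys previous nonvanishings, the procedure terminates after finitely many steps.

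Denoting by $X_i'$ the $G$-set of free generators of $\bD'_i$, for $i\in[m+1,n]$ we have $(X_i')^H\neq\emptyset$ iff $\bD'(H)_i\neq 0$ iff $i\leq\un{n}(H)$, and therefore $\sum_{i=m+1}^n\omega_{X_i'}(H)=\un{n}(H)-m$. Since $\omega_{G/G}$ is the constant function $1$ on $\Sub(G)$, this yields the identity in $C(G)$
$$\Dim X=(m+1)\,\omega_{G/G}+\sum_{i=m+1}^n\omega_{X_i'}.$$
Applying $\Psi$, using linearity together with $\Psi(\omega_Y)=\Omega_Y$ for every $G$-set $Y$ and $\Omega_{G/G}=0$, gives $\Psi(\Dim X)=\sum_{i=m+1}^n\Omega_{X_i'}$; by Proposition \ref{pro:computing} applied to $\bD'$, this equals $[H_n(\bD'(1))]=[\widetilde H_n(X;k)]=\Hn([X])$. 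The main obstacle is the modification construction itself: its validity hinges on the monotonicity estimate that keeps $\bD'$ tight while closing the dimensional gaps at each $\bD(H)$, together with the verification that adjoining an identity-differential summand indeed preserves chain homotopy type.
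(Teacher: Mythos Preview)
Your proof is correct and follows the same overall architecture as the paper's---reduce to a tight complex via Proposition~\ref{pro:ChainHomEq}, then invoke Proposition~\ref{pro:computing}---but the way you arrange the chain complex so that the $\omega_{X_i}$ add up to $\Dim X$ is genuinely different. The paper first replaces $X$ by a $G$-simplicial complex (via equivariant simplicial approximation), so that the reduced chain complex $\bC$ is automatically \emph{full} in the sense of property $(**)$; it then observes that the passage from $\bC$ to the tight complex $\bD$ only truncates above the homological dimension and hence preserves fullness, and applies Lemma~\ref{lem:DimSum} to get $\Dim \bD=\sum_{i=0}^n\omega_{X_i}$, after which the terms with $i\le m$ are killed using $X_i^G\neq\emptyset\Rightarrow\Omega_{X_i}=0$. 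You instead work entirely on the algebraic side: starting from an arbitrary tight $\bD$, you splice in contractible free summands $k[(G/H)^?]\xrightarrow{\id}k[(G/H)^?]$ to enforce a weaker ``no gaps'' condition on the window $[m+1,\un n(H)]$ only, and then compute $\Dim X$ directly as $(m+1)\,\omega_{G/G}+\sum_{i=m+1}^n\omega_{X_i'}$. Your route avoids the appeal to simplicial approximation and the auxiliary notion of fullness, at the cost of the explicit inductive modification (whose verification of tightness via monotonicity you carry out correctly). The paper's route is slightly slicker in that the required combinatorial property is inherited from geometry rather than manufactured, but both arguments are equally valid and of comparable length.
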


To prove Proposition \ref{pro:DependsOnly}, we need to introduce a property that is found in chain complexes of $G$-simplicial complexes. Let $X$ be a finite $G$-simplicial complex and let $\bC:=\widetilde C_* (X^?; k )$ denote the reduced chain complex of $X$ over the orbit category $\G_G$. The complex $\bC$ is a finite chain complex of free $k\G _G$-modules. Let $X_i$ denote the $G$-set of $i$-dimensional simplices in $X$ for every $i$. Note that since $X$ is a $G$-simplicial complex, the collection $\{X_i\}$ satisfies the following property:

\vskip 5pt 

$(**)$ For every subgroup $H \leq G$, if $X_i^H \neq \emptyset$ for some $i$, then $X_j ^H \neq \emptyset$ for every $j \leq i$. 

\vskip 5pt

Note that if $X$ is a $G$-CW-complex and $X_i$ denotes the $G$-set of $i$-dimensional cells in $X$, then the collection $\{X_i\}$ does not satisfy this property in general. If this property holds for a $G$-CW-complex $X$, then we say $X$ is a \emph{full} $G$-CW-complex. More generally, we define the following:

\begin{definition} Let $\bC$ be a finite free chain complex of $R\G _G$-modules. For each $i$, let $X_i$ denote the $G$-set such that $\bC_i \cong R[X_i ^?]$. We say $\bC$ is a \emph{full $R\G_G$-complex} if the collection of $G$-sets $\{X_i\}$ satisfies the property $(**)$. 
\end{definition}

For chain complexes that are full, we have the following observation:

\begin{lemma}\label{lem:DimSum} Let $\bC$ be a finite chain complex $k\G_G$ of dimension $n$. Suppose that $\bC$ is a full complex, and $X_i$ denotes the $G$-set such that $\bC_i \cong R[X_i^?]$ for each $i$.  Let $f$ be the super class function defined by $f(H)=\dim \bC(H) +1$ for all $H \leq G$. Then $$f=\sum _{i=0} ^n \omega _{X_i}.$$
\end{lemma}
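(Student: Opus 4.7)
The plan is to evaluate both sides of the claimed identity at an arbitrary subgroup $H \leq G$ and show they agree. The key observation is that the fullness hypothesis $(**)$ forces the $H$-fixed points of the cell $G$-sets $\{X_i\}$ to form an initial segment of $\{0,1,\ldots,n\}$.

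First, I would unravel the definitions: for each $H \leq G$, the chain complex $\bC(H)$ has $\bC_i(H) = R[X_i^H]$, so $\bC_i(H)$ is nonzero if and only if $X_i^H \neq \emptyset$. Consequently, $\dim \bC(H)$ equals the largest $i$ for which $X_i^H \neq \emptyset$ (and is $-1$ if $X_i^H = \emptyset$ for every $i$, in which case $f(H) = 0$). Call this largest index $d(H)$, so $f(H) = d(H) + 1$.

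Next I would invoke the fullness property $(**)$: since $X_{d(H)}^H \neq \emptyset$, the property implies $X_j^H \neq \emptyset$ for every $0 \leq j \leq d(H)$, while by definition of $d(H)$ we have $X_j^H = \emptyset$ for $j > d(H)$. Recalling that $\omega_{X_i}(H) = 1$ when $X_i^H \neq \emptyset$ and $0$ otherwise, this gives
\[
\Bigl(\sum_{i=0}^{n} \omega_{X_i}\Bigr)(H) \;=\; \sum_{i=0}^{n} \omega_{X_i}(H) \;=\; \sum_{i=0}^{d(H)} 1 \;=\; d(H)+1 \;=\; f(H).
\]
The edge case where every $X_i^H$ is empty is handled identically, with both sides yielding $0$. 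Since $H$ was arbitrary, the equality of super class functions follows.

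I do not anticipate a genuine obstacle: the only content is the bookkeeping that $(**)$ lets us replace the indicator sum $\sum_i \mathbf{1}[X_i^H \neq \emptyset]$ by the length of an initial segment, which equals $\dim \bC(H)+1$. The proof is a one-paragraph verification.
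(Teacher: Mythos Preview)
Your argument is correct and is essentially identical to the paper's proof: both evaluate at an arbitrary $H\leq G$, use that $\sum_i \omega_{X_i}(H)$ counts the indices $i$ with $X_i^H\neq\emptyset$, and invoke fullness to conclude these indices form the interval $\{0,\dots,\dim\bC(H)\}$. If anything, your write-up is slightly more explicit about the role of property $(**)$ and the degenerate case than the paper's one-line version.
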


\begin{proof} Let $H \leq G$. The sum $\sum _i \omega _{X_i} (H)$ is equal to the number of $i$ such that $X_i ^H \neq \emptyset$. Since $X_i ^H \neq \emptyset$ if and only if $i$ satisfies $0 \leq i \leq \dim X^H$, we obtain that $\sum _i \omega _{X_i} (H)=\dim \bC(H) +1$.
\end{proof} 

Now, we are ready to prove Proposition \ref{pro:DependsOnly}. 

\begin{proof}[Proof of Proposition \ref{pro:DependsOnly}] Let $[X]-[Y]$ be an element in $\cM(G)$. By Lemma \ref{lem:StrictlyMonotone}, we can assume $X$ and $Y$ are Moore $G$-spaces with strictly monotone dimension functions. Moreover we can assume that both $X$ and $Y$ are $G$-simplicial complexes. This is because every $G$-CW-complex is $G$-homotopy equivalent to a $G$-simplicial complex (see \cite[Proposition A.4]{Oliver-Segev}). Let $\bC:=\widetilde C_* (X ^?; k)$ denote the reduced chain complex for $X$ over the orbit category. 

Since the dimension function of $X$ is strictly monotone, by Proposition \ref{pro:ChainHomEq} the complex $\bC$ is chain homotopy equivalent to a tight complex $\bD$, which is by construction a finite chain complex of free $k\G _G$-modules. Moreover, we can take $\bD$ to be a full complex. To see this, observe that since $X$ is a simplicial complex, $\bC$ is a full complex. The construction of $\bD$ involves erasing chain modules of $\bC$ above the homological dimension, hence we can assume that $\bD$ is also a full complex.  

For each $i$, let $X_i$ denote the finite $G$-set such that $\bD _i \cong k[X_i ^?]$. By Lemma \ref{lem:DimSum}, for each $H \leq G$, we have $\dim \bD(H) +1=\sum _{i=0} ^n \omega _{X_i}(H)$, where $\dim \bD (H)$ denotes the chain complex dimension of $\bD$. Since $\bD$ is a tight complex, we have $\un{n} (H) =\dim \bD(H)$ for all $H \leq G$, so we have $\Dim \bD=\sum _{i=0} ^n \omega _{X_i} $. By Proposition \ref{pro:computing}, the equation
$$[\widetilde H_{n} (X; k)]=\sum _{i=m+1}^n \Omega _{X_i}$$ holds in the Dade group $D^{\Omega} (G)$, where $m=\un{n} (G)$. Note that since $\bD$ is a full complex, $X_i^G \neq \emptyset$ for all $i \leq m$. This means that $\Omega _{X_i}=0$ for all $i \leq m$, hence we conclude $$[\widetilde H_{n} (X; k)]=\sum _{i=0}^n \Omega _{X_i}=\Psi (\sum _{i=0} ^n \omega_{X_i})=\Psi (\Dim \bD).$$  The same equality holds for $[Y]$, hence $\Hn= \Psi \circ \Dim$. 
\end{proof}

The rest of the section is devoted to the proof of Theorem \ref{thm:mainSeq} stated in the introduction. Let $\cM_0(G)$ denote the kernel of the homomorphism $\Hn: \cM(G) \to D^{\Omega} (G)$. Note that $\Hn$ is surjective because $\Hn ([X])=\Omega _X$ when $X$ is a finite $G$-set such that $|X^G| \neq 1$.  By Proposition \ref{pro:DependsOnly}, we have $\Hn=\Psi \circ \Dim$, so the map $\Psi$ is also surjective. Hence there is a commuting diagram 
$$\xymatrix{
0 \ar[r] & \cM_0(G) \ar[r] \ar[d]^{\Dim_0} &  \cM(G) \ar[d]^{\Dim} \ar[r]^{\Hn}
& D^{\Omega} (G)  \ar[d]^{=}  \ar[r] & 0  \\
0  \ar[r] & \ker \Psi \ar[r] &  C(G) \ar[r]^{\Psi}
&  D^{\Omega} (G) \ar[r] & 0 }\\ $$
where the horizontal sequences are exact. By Proposition \ref{pro:DimSurj}, the homomorphism $\Dim$ is surjective, hence by the Snake Lemma $\Dim _0$ is also surjective. To complete the proof of Theorem \ref{thm:mainSeq}, it remains to show that $\ker \Psi$ is equal to the group of Borel-Smith functions $C_b(G)$. Recall that Borel-Smith functions are super class functions satisfying certain conditions called Borel-Smith conditions. A list of these conditions can be found in \cite[Definition 3.1]{Bouc-Yalcin} or \cite[Definition 5.1]{tDieck-BlueBook}.

%
%
%
%

\begin{proposition} Let $G$ be a $p$-group. Then, the kernel of the homomorphism $\Psi : C(G) \to D^{\Omega }(G)$ is equal to the group of Borel-Smith functions $C_b(G)$.
\end{proposition}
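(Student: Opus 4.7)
The plan is to prove $\ker \Psi = C_b(G)$ by establishing each inclusion separately. The main ingredients are the identity $\Hn = \Psi \circ \Dim$ from Proposition \ref{pro:DependsOnly}, the surjectivity of $\Dim$ from Proposition \ref{pro:DimSurj}, and the theorem of Dotzel-Hamrick identifying Borel-Smith functions with dimension functions of virtual real representation spheres on a $p$-group.

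For the inclusion $C_b(G) \subseteq \ker \Psi$, given $f \in C_b(G)$, Dotzel-Hamrick produces real $G$-representations $V$ and $W$ with $f = \Dim S(V) - \Dim S(W)$, so the virtual element $\alpha := [S(V)] - [S(W)] \in \cM(G)$ satisfies $\Dim \alpha = f$. Each reduced homology module $\widetilde H_{*}(S(V);k)$ is the trivial $kG$-module $k$, whose class in $D^{\Omega}(G)$ is zero, so $\Hn(\alpha) = 0$. By Proposition \ref{pro:DependsOnly}, $\Psi(f) = \Psi(\Dim \alpha) = \Hn(\alpha) = 0$, hence $f \in \ker \Psi$.

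For the reverse inclusion $\ker \Psi \subseteq C_b(G)$, given $f \in \ker \Psi$, the surjectivity of $\Dim$ yields $\alpha \in \cM(G)$ with $\Dim \alpha = f$; from $\Hn(\alpha) = \Psi(f) = 0$ we see that $\alpha \in \cM_0(G)$, so $f$ lies in the image of the restriction $\Dim_0 = \Dim\big|_{\cM_0(G)}$. It remains to show that this image is contained in $C_b(G)$, namely that for every virtual capped Moore $G$-space $[X] - [Y]$ whose reduced homology is trivial in $D^{\Omega}(G)$, the difference $\Dim X - \Dim Y$ satisfies the Borel-Smith conditions listed in \cite[Definition 3.1]{Bouc-Yalcin}.

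This last step is the main obstacle. A natural approach is via Smith theory applied $p$-subquotient by $p$-subquotient: for $K \trianglelefteq H \leq G$ with $|H/K| = p$, the subspace $X^K$ carries an action of $H/K \cong \bbZ/p$ whose fixed-point set is $X^H$, and the reduced homology of $X^K$ inherits, via restriction of the permutation $kG$-module $\widetilde H_n(X;k)$, enough structure as a $k[H/K]$-module that the classical Smith exact sequence yields the required numerical congruence relating $\Dim X(K)$ and $\Dim X(H)$. Running the argument over all such pairs $(K,H)$ recovers every Borel-Smith relation. The technical heart of the proof is the careful bookkeeping that matches the Smith-theoretic congruences to the complete list of Borel-Smith conditions, and this is where the main difficulty of the argument resides.
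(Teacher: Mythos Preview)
Your overall strategy matches the paper's: both directions are handled via the identity $\Hn=\Psi\circ\Dim$, the inclusion $C_b(G)\subseteq\ker\Psi$ is obtained from Dotzel--Hamrick exactly as you do, and for the reverse inclusion both you and the paper lift $f\in\ker\Psi$ to some $[X]-[Y]\in\cM_0(G)$ and then reduce the verification of the Borel--Smith conditions to the relevant subquotients $H/L$.

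The gap is in your final paragraph. First, a factual slip: the sentence ``the reduced homology of $X^K$ inherits, via restriction of the permutation $kG$-module $\widetilde H_n(X;k)$, enough structure as a $k[H/K]$-module\ldots'' is not correct. The module $\widetilde H_n(X;k)$ is endo-permutation, not permutation, and more importantly $\widetilde H_{\un n(K)}(X^K;k)$ is \emph{not} obtained from $\widetilde H_{\un n(1)}(X;k)$ by restriction---it lives in a different degree and is the homology of a different space. So the mechanism you describe for feeding the hypothesis $\Hn([X])=\Hn([Y])$ into a Smith-theory argument does not work as stated. Second, even granting a corrected formulation, classical Smith theory gives congruences for $\bbF_p$-homology \emph{spheres}, whereas here $X^K$ is only a Moore space; extracting the Borel--Smith congruences (especially the mod~$4$ condition at $p=2$ involving quaternion subquotients) from this setup requires an argument you have not supplied. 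You yourself flag this as ``the main obstacle,'' and indeed the proposal does not close it.

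The paper sidesteps this entirely. After reducing to a subquotient $H/L$, it does not invoke Smith theory on the Moore spaces $X^L,Y^L$; instead it observes that one may work directly with the map $\Psi$ for the small group $H/L$, and for the specific subquotients appearing in the Borel--Smith conditions (cyclic of order $p$ or $p^2$, $\bbZ/p\times\bbZ/p$, quaternion) it cites the explicit computation in \cite[p.~12]{Bouc-Yalcin} showing that $\ker\Psi=C_b$ holds there. This is an algebraic verification on a short list of groups rather than a topological argument, and it avoids the difficulty you identified.
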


\begin{proof} A proof of this statement can be found in \cite[Theorem 1.2]{Bouc-Yalcin}, but the proof given there uses the biset functor structure of the morphism $\Psi : C(G) \to D^{\Omega} (G)$, hence the tensor induction formula of Bouc. Here we give an argument independent of the tensor induction formula.

Let $f \in C_b (G)$ be a Borel-Smith function. Then by Dotzel-Hamrick \cite{Dotzel-Hamrick} there is a virtual real representation $U-V$ such that $\Dim U -\Dim V=f$. Since the unit spheres $S(U)$ and $S(V)$ are orientable homology spheres over $k$, both $S(U)$ and $S(V)$ are Moore $G$-spaces and the element $[S(U)]-[S(V)]$ is in $\cM_0 (G)$. This proves that $f\in \im(\Dim _0)=\ker \Psi$.  

For the converse, let $f=\Dim _0 ([X]-[Y])$ for some $[X]-[Y] \in \cM_0 (G)$. Then, $\Hn ([X])=\Hn ([Y])$. We want to show that $f$ satisfies the Borel-Smith conditions. Since the Borel-Smith conditions are given as conditions on certain subquotients, first note that for any subquotient $H/L$, we can look at $(H/L)$-spaces $X^L$ and $Y^L$, and the dimension function for the virtual Moore $G$-space $[X^L]-[Y^L]$ would satisfy Borel-Smith conditions if and only if the function $f$ satisfies the Borel-Smith condition corresponding to the subquotient $H/L$. For these subquotients it is easy to check that every super class function $f$ in $\ker \Psi$ satisfies the Borel-Smith conditions (see \cite[Page 12]{Bouc-Yalcin}). 
\end{proof} 

This completes the proof of Theorem \ref{thm:mainSeq}. We can view this theorem as a topological interpretation of the exact sequence given in \cite[Theorem 1.2]{Bouc-Yalcin}.  There is an interesting corollary of Theorem 1.4 that gives a slight generalization of the Dotzel-Hamrick Theorem (see \cite{Dotzel-Hamrick}).  

\begin{proposition}\label{pro:BorelSmith}  Let $G$ be a finite $p$-group and let $k$ be a field of characteristic $p$. Suppose that $X$ is a finite Moore $G$-space of dimension $n$ such that $H_n (X; k)$ is a capped permutation $kG$-module. Then the super class function $\Dim X$ satisfies the Borel-Smith conditions.
\end{proposition}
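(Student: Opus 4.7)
The plan is to apply Theorem~\ref{thm:mainSeq}, exploiting the fact that a capped permutation $kG$-module has trivial cap (namely the trivial module $k$), so its class in the Dade group $D(G)$, and a fortiori in $D^\Omega(G)$, is zero. Once $[X]$ is realized as an element of $\cM(G)$, we would have $\Hn([X]) = [\widetilde H_n(X;k)] = 0$, and by Proposition~\ref{pro:DependsOnly}, $\Psi(\Dim X) = \Hn([X]) = 0$. The exactness of the bottom row of the diagram in Theorem~\ref{thm:mainSeq} identifies $\ker \Psi$ with $C_b(G)$, so $\Dim X \in C_b(G)$, as required.

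The technical issue is that $\cM(G)$ is defined using capped Moore $G$-spaces, whereas the hypothesis allows $X^G$ to have zero reduced homology (or to be empty). To handle this, I would replace $X$ by a join $Y = X \ast S(V)$, where $V$ is a real $G$-representation chosen so that $Y$ is a capped Moore $G$-space. Since $G$ is a $p$-group, every real $G$-representation is $k$-orientable, so $\widetilde H_{\dim V - 1}(S(V);k) \cong k$ as a trivial $kG$-module; hence the K\"unneth formula for joins gives
$$\widetilde H_{\mathrm{top}}(Y;k) \cong \widetilde H_n(X;k) \otimes_k k = \widetilde H_n(X;k),$$
which remains a capped permutation $kG$-module. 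Applying the argument of the first paragraph to $Y$ yields $\Dim Y \in C_b(G)$, and since $\Dim S(V) \in C_b(G)$ by the theorem of Dotzel--Hamrick and $\Dim$ is additive over joins, we obtain $\Dim X = \Dim Y - \Dim S(V) \in C_b(G)$.

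The main obstacle is the capping step: choosing $V$ so that the join $Y$ is genuinely a capped Moore $G$-space. This is delicate when $X^G$ is non-empty but has trivial reduced homology, in which case the trivial summand of $V$ must be taken large enough that the sphere factor $S(V^G)$ contributes a nontrivial top-degree class to $\widetilde H_*(Y^G;k)$. Outside this edge case, the result is a formal consequence of the exact sequence of Theorem~\ref{thm:mainSeq} and the identification of $\ker \Psi$ with $C_b(G)$.
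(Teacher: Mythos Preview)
Your first paragraph is exactly the paper's proof: the paper simply notes that $[H_n(X;k)]=0$ in $D^\Omega(G)$, so $[X]\in\cM_0(G)$, and then invokes Theorem~\ref{thm:mainSeq}. The paper does not address the cappedness of $X$ as a Moore $G$-space at all; it tacitly treats $[X]$ as an element of $\cM(G)$.

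Your extra care in paragraphs two and three is commendable, but the proposed fix does not close the gap you identify. If $X^G$ is non-empty and $k$-acyclic, then for \emph{any} real representation $V$ the fixed set $(X*S(V))^G = X^G * S(V^G)$ is again $k$-acyclic: over a field, the reduced homology of a join is a shifted tensor product of the reduced homologies of the factors, so $\widetilde H_*(X^G)=0$ forces $\widetilde H_*(X^G*S(V^G))=0$. Enlarging the trivial summand of $V$ only raises the dimension of $S(V^G)$; it cannot create reduced homology in $Y^G$. Thus no choice of $V$ makes $Y$ a capped Moore $G$-space in this case, and your reduction to $\cM(G)$ breaks down precisely where you flagged it. The paper's two-line proof has the same implicit lacuna, so the honest reading is that the proposition is intended for capped $X$ (consistent with the standing context of \S\ref{sect:GroupGMoore}); under that reading your argument and the paper's coincide.
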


\begin{proof} By the assumption on homology, $[H_n(X;k)]=0$ in $D^{\Omega}(G)$, hence $[X] \in \cM_0 (G)$. Now the result follows from Theorem \ref{thm:mainSeq}.
\end{proof}

\section{Operations on Moore $G$-spaces}
\label{sect:Operations}

The main aim of this section is to show that the assignment $G \to \cM(G)$ defined over a collection of $p$-groups $G$ has an easy to describe biset functor structure and that the maps $\Hn$ and $\Dim$ are both natural transformations of biset functors.   We also give a topological proof of Bouc's tensor induction formula for relative syzygies (see Theorem \ref{thm:TensorInd} below for a statement).  

Let $\cC$ denote a collection of $p$-groups closed under taking subgroups and subquotients, and let $R$ be a commutative ring with unity. An $(H, K)$-biset is a set $U$ together with a left $H$-action and a right $K$-action such that $(hu)k=h(uk)$ for every $h\in H$, $u\in U$, and $k\in K$. The $\cC$-biset category over $R$ is the category whose objects are $H \in \cC$ and whose morphisms $\Hom (K,H)$ for $H, K\in \cC$ are given by $R$-linear combinations of $(H,K)$-bisets, where the composition of two morphisms is defined by the linear extension of the assignment $(U, V) \to U \times_{K} V$ for $U$ an $(H,K)$-biset and $V$ a $(K,L)$-biset. A biset functor $F$ on $\cC$ over $R$ is a functor $F$ from the $\cC$-biset category over $R$ to the category of $R$-modules. We refer the reader to \cite{Bouc-BisetBook} for more details on biset functors for finite groups.

To define a biset functor structure on $\cM(-)$, we need to define the action of an $(H,K)$-biset $U$ on an isomorphism class $[X]$ in $\cM (H)$ and extend it linearly. To simplify the notation we will define these actions on a representative of each equivalence class and show that the definition is independent of the choice of the representative. Every $(H,K)$-biset can be expressed as a composition of 5 types of basic bisets, called restriction, induction, isolation, inflation, and deflation bisets (see \cite[Lemma 2.3.26]{Bouc-BisetBook}). Except for the induction biset, the action of a biset on a Moore $G$-space is easy to define. For example, if $\varphi : H \to K$ is a group homomorphism and $U=(H\times K)/\Delta(\varphi)$ where $\Delta (\varphi)=\{(h, \varphi(h)) \, | \, h\in H \}$, then for a Moore $K$-space $X$, we define $\cM(U) (X)$ as the Moore space $X$ together with the $H$-action given by $hx=\varphi (h)x$. This gives us the action of restriction, isolation, and inflation bisets on a Moore space. The action of the deflation biset can also be defined easily by taking fixed point subspaces: Given a normal subgroup $N$ in $G$ and a Moore $G$-space $X$, we define $\Def ^G _{G/N} X$ as the $G/N$-space $X^N$. 

The action of the induction biset $U={}_H (H)_K$, where $K \leq H$ is harder to define. This operation is called \emph{join induction} and the difficulty comes from describing the equivariant cell structure of the resulting space. To make this task easier we will define join induction on a Moore $G$-space whose $G$-CW-structure comes from a realization of a $G$-poset. 

Recall that a $G$-poset is a partially ordered set $X$ together with a $G$-action such that $x\leq y$ implies $gx \leq gy$ for all $g\in G$. Associated to a $G$-poset $X$, there is a simplicial $G$-complex whose $n$-simplices are given by chains of the form $x_0 < x_1 <\cdots < x_n$ where $x_i \in X$. This simplicial complex is called the associated flag complex of $X$ (or the order complex of $X$) and is denoted by $\Flag (X)$. We denote the geometric realization of $\Flag (X)$ by $|X|$. The complex $\Flag (X)$ is an admissible simplicial $G$-complex, i.e., if it fixes a simplex, it fixes all its vertices. Since $\Flag (X)$ is an admissible $G$-CW-complex, the realization $|X|$ has a $G$-CW-complex structure. For more details on $G$-posets we refer the reader to \cite[Definition 11.2.7]{Bouc-BisetBook}. 

By an equivariant version of the simplicial approximation theorem, every $G$-CW-complex is $G$-homotopy equivalent to a simplicial $G$-complex (see \cite[Proposition A.4]{Oliver-Segev}). Given a $G$-simplicial complex $X$, by taking the barycentric subdivision, we can further assume that $X$ is the flag complex of the poset of simplices in $X$. Therefore, up to $G$-homotopy we can always assume that a given Moore $G$-space is the realization of a $G$-poset $X$.

Let $X$ and $Y$ be two $G$-posets. The product of $X$ and $Y$ is defined to be the $G$-poset $X\times Y$ where the $G$-action is given by the diagonal action $g(x,y)=(gx, gy)$, and the order relation is given by $(x,y) \leq (x', y')$ if and only if $x\leq x'$ and $y \leq y'$. The join of two $G$-posets $X$ and $Y$ is defined to be the disjoint union $X\coprod Y$ together with extra order relations $x \leq y$ for all $x\in X$ and $y \in Y$. However this description of the join is not suitable for defining join induction since it is not symmetric. Instead we use the following definition:

\begin{definition}\label{def:join} For a $G$-poset $X$, let $cX$ denote the \emph{cone of $X$} where $cX=\{0_X \} \coprod X$ with trivial $G$-action on $0_X$. The order relations on $cX$ are the same as the order relations on $X$ together with an extra relation $0_X \leq x$ for all $x\in X$. We define the (symmetric) join of two $G$-posets $X$ and $Y$ as the poset defined by $$X*Y := (cX\times cY)-\{(0_X, 0_Y)\}.$$
\end{definition}

Throughout this section, when we refer to the join of two posets, we always mean the symmetric join defined above. The realization $|X*Y|$ of the (symmetric) join of two $G$-posets $X$ and $Y$ is $G$-homeomorphic to the join $|X|*|Y|$ of realizations of $X$ and $Y$. This is proved in \cite[Proposition 1.9]{Quillen}) but below we prove this more generally for the join of finitely many $G$-posets. We define
the (symmetric) join $X_1 \ast \cdots \ast X_n$ of $G$-posets $X_1, X_2, \dots , X_n$ as the $G$-poset $\prod _i cX_i -\{(0_{X_1}, \dots, 0_{X_n} )\}$ with the diagonal $G$-action. Note that the geometric join $|X_1|\ast \cdots \ast |X_n| $ can be identified with the subspace of $\prod_i c|X_i|$ formed by elements $t_1x_1+\cdots + t_n x_n$ such that $t_i \in [0,1]$ and $\sum _i t_i =1$. Here $c|X_i|$ denotes the identification space $X\times [0,1]/\sim$ where $(x,0)\sim (x', 0)$ for all $x,x'\in X$. Alternatively we can consider elements of $c|X_i|$ as expressions $t_ix_i$ where $t_i\in [0,1]$ and $x_i \in |X_i|$. We have the following observation.
 
\begin{proposition}\label{pro:Realization} Let $\{X_i\}$ be a finite set of $G$-posets. Then $|\ast _i X_i|$ is $G$-homeomorphic to the (topological) join $\ast_i|X_i|$ of the realizations of the $X_i$. 
\end{proposition}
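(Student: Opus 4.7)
The plan is to prove this by induction on $n$, using the $n = 2$ case of Quillen (already cited in the excerpt) as the base case. The induction works because both the symmetric join of $G$-posets and the topological join of spaces are associative, so that $n$-fold joins reduce to iterated binary joins in a $G$-equivariant way.

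First I would establish associativity of the symmetric join of $G$-posets. The key observation is that the cone $c(X_1 * X_2)$, defined as $\{0_{X_1 * X_2}\} \coprod (X_1 * X_2)$, is canonically $G$-isomorphic to $cX_1 \times cX_2$ via identifying $0_{X_1 * X_2}$ with $(0_{X_1}, 0_{X_2})$: this element is the unique minimum of $cX_1 \times cX_2$, is $G$-fixed, and the remaining elements form the poset $X_1 * X_2$ by Definition~\ref{def:join}. Consequently
$$(X_1 * X_2) * X_3 = c(X_1 * X_2) \times cX_3 \setminus \{(0, 0_{X_3})\} \cong cX_1 \times cX_2 \times cX_3 \setminus \{(0_{X_1}, 0_{X_2}, 0_{X_3})\} = X_1 * X_2 * X_3$$
as $G$-posets, and iterating shows that the $n$-fold symmetric join agrees with any parenthesization into iterated binary joins. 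On the topological side, associativity of the join is immediate from the explicit description $|Y_1| * \cdots * |Y_n| = \{\sum_i t_i y_i : t_i \in [0,1], \sum_i t_i = 1\}$ given in the paragraph preceding the proposition.

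Combining these two associativity facts with Quillen's $n = 2$ case gives by induction a chain of $G$-homeomorphisms
$$|*_{i=1}^{n} X_i| \cong |(*_{i=1}^{n-1} X_i) * X_n| \cong |*_{i=1}^{n-1} X_i| * |X_n| \cong (*_{i=1}^{n-1} |X_i|) * |X_n| \cong *_{i=1}^{n} |X_i|,$$
where the second homeomorphism is Quillen's result applied to the pair of $G$-posets $*_{i=1}^{n-1} X_i$ and $X_n$, the third is the inductive hypothesis, and the first and last are the associativity identifications just established. The hard part is really the base case $n = 2$: the realization of a product of posets is \emph{not} literally the product of realizations, and identifying the order complex of $cX \times cY \setminus \{0\}$ with the topological join $|X| * |Y|$ requires an explicit simplicial subdivision argument. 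Since this is invoked from Quillen, the only remaining work is to check that the associativity isomorphisms are $G$-equivariant, which is automatic since they are defined coordinatewise and the $G$-action on every construction in sight is diagonal.
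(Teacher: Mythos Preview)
Your proof is correct and takes a genuinely different route from the paper's. The paper constructs the homeomorphism directly for all $n$ at once: it identifies $|\prod_i cX_i - \{0\}|$ with the subset $\bigcup_i c|X_1|\times\cdots\times |X_i|\times\cdots\times c|X_n|$ of $\prod_i c|X_i|$, and then radially projects from the deleted cone point onto the simplex $\{\sum t_i = 1\}$ via $(t_1x_1,\dots,t_nx_n)\mapsto (t_1'x_1,\dots,t_n'x_n)$ with $t_i'=t_i/\sum_j t_j$. Your argument instead bootstraps from Quillen's $n=2$ case by establishing associativity on both sides; the key algebraic observation $c(X_1*X_2)\cong cX_1\times cX_2$ is exactly what makes the poset side associate cleanly. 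The paper's approach has the advantage of producing an explicit global formula and making visible the point-set subtlety (it notes, citing Walker, that one must work in the compactly generated topology rather than the product topology for the radial map to be a homeomorphism); your approach hides this inside Quillen's result and the standard associativity of topological joins, which is fine under the paper's standing compactly-generated convention. One small thing worth saying explicitly: you check $G$-equivariance only for the associativity identifications, but you also need Quillen's $n=2$ homeomorphism to be $G$-equivariant. It is, since his map is natural in both variables and hence commutes with the diagonal $G$-action, but it would be cleaner to say so.
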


\begin{proof} The realization $|\ast X_i|=|\prod _i cX_i -\{(0_{X_1}, \dots, 0_{X_n} )\}|$ can be identified with the union of subspaces $$ \bigcup _i c|X_1| \times \cdots \times |X_i| \times \cdots \times c|X_n|$$
in the product $\prod_i c|X_i|$. Using the radial projection from the point $\{(0_{X_1}, \dots, 0_{X_n} )\}$ in $\prod_i c|X_i|$, we can write a $G$-homeomorhism between this subspace and the geometric join $|X_1|\ast \cdots \ast |X_n| $. This homeomorphism takes the point $(t_1x_1, \dots , t_n x_n)$ in $\prod _i c|X_i| $ to $(t_1'x_1, \dots , t_n 'x_n)$ where $t_i '=t_i/(\sum _i t_i)$ for all $i$. Note that this argument only works if we take the compact open topology on the product, not the product topology (see \cite[Theorem 3.1]{Walker}). \end{proof}

Let $U$ be a finite $(H,K)$-biset and $X$ be a $K$-poset. Define $t_U (X)$ as the set $$t_U (X) :=\Map _K (U^{op}, X)$$ of all functions $f : U \to X$ such that $f(uk)=k^{-1} f(u)$. The poset structure on $t_U (X)$ is defined by declaring $f_1\leq f_2$ if and only if for every $u\in U$, $f_1(u) \leq f_2 (u)$. There is an $H$-action on $t_U (X)$ given by $(hf) (u)=f(h^{-1} u)$ for all $h \in H$, $u \in U$. The set $t_U (X)$ is an $H$-poset with respect to this action. The assignment $X\to t_U (X)$ is called the generalized tensor induction of posets associated to $U$ (see \cite[11.2.14]{Bouc-BisetBook}).

\begin{definition}\label{defn:JoinInd} Let $K$ and $H$ be finite groups and $U$ be a finite $(H,K)$-biset. For a $K$-poset $X$, we define the \emph{join induction induced by $U$} on $X$ as the $H$-poset $$\jn _U X:=t_U (cX) -\{ f_{0} \} $$ where $f_{0}$ is the constant function defined by $f_{0} (u)=0_X$ for all $u\in U$. When $U={}_H H _K $ is the induction biset, then we denote the join induction operation $\jn_U $ by $\jn_ K ^H$, and we call it \emph{join induction from $K$ to $H$}. \end{definition}

The following result justifies this definition.

\begin{proposition}\label{pro:Justifies} Let $R$ be a coefficient ring and let $X$ be a $K$-poset such that the realization $|X|$ is a Moore $K$-space over $R$. Then for every $(H,K)$-biset $U$, the realization of the $H$-poset $\jn _U X$ is a Moore $H$-space over $R$.
\end{proposition}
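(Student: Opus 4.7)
The plan is to verify directly, for every subgroup $H' \leq H$, that the fixed-point subspace $|\jn_U X|^{H'}$ has reduced $R$-homology concentrated in a single degree, which is exactly what is required for $|\jn_U X|$ to be a Moore $H$-space over $R$. First I would compute the $H'$-fixed subposet of $t_U(cX)$. Under the $H$-action $(hf)(u) = f(h^{-1}u)$, a $K$-equivariant map $f\colon U \to cX$ is $H'$-fixed precisely when it is constant on left $H'$-orbits in $U$. Since the left $H$-action and right $K$-action on $U$ commute, the quotient $H'\backslash U$ is naturally a right $K$-set and I obtain
$$ t_U(cX)^{H'} \cong \Map_K(H'\backslash U,\, cX). $$

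Next I would decompose $H'\backslash U = \coprod_{j=1}^n O_j$ as right $K$-sets, so each orbit has the form $O_j \cong K/L_j$ for some $L_j \leq K$, the $K$-stabilizer of a chosen representative. A $K$-equivariant map $K/L_j \to cX$ is determined by a single $L_j$-fixed value, and $(cX)^{L_j} = c(X^{L_j})$ because the cone point is always fixed and the remaining fixed points lie in $X^{L_j}$. This yields an $H'$-equivariant isomorphism $t_U(cX)^{H'} \cong \prod_{j} c(X^{L_j})$. Under this identification the unique constant-zero map $f_0$ corresponds to the tuple $(0_X,\ldots,0_X)$, so removing it and applying Definition \ref{def:join} gives an isomorphism of posets
$$ (\jn_U X)^{H'} \cong X^{L_1} \ast \cdots \ast X^{L_n}. $$

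Finally I would pass to realizations. Because the flag complex of a $G$-poset is admissible, fixed points commute with geometric realization, giving $|\jn_U X|^{H'} = |(\jn_U X)^{H'}|$ and $|X|^{L_j} = |X^{L_j}|$. Combined with Proposition \ref{pro:Realization} this produces a homeomorphism
$$ |\jn_U X|^{H'} \cong |X|^{L_1} \ast \cdots \ast |X|^{L_n} $$
identifying the $H'$-fixed subspace with the topological join of the fixed subspaces $|X|^{L_j}$. By hypothesis each $|X|^{L_j}$ is a Moore space over $R$, and by Lemma \ref{lem:DimOfJoin} applied iteratively a finite join of Moore spaces is again a Moore space. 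Since $H' \leq H$ was arbitrary, $|\jn_U X|$ is a Moore $H$-space over $R$. There is no real obstacle here; the only conceptual point that requires care is recognizing that the product of cones with its common apex deleted is exactly the symmetric join of posets, so that Proposition \ref{pro:Realization} converts our poset-level computation into an honest topological join — everything else is routine bookkeeping of fixed points under a biset action.
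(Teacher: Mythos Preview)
Your proposal is correct and follows essentially the same route as the paper: compute the $H'$-fixed subposet by passing to $K$-maps out of $H'\backslash U$, decompose that $K$-set into transitive orbits to obtain a product of cones with the common apex removed, recognize this as the symmetric join, and then invoke Proposition~\ref{pro:Realization} together with the fact that a join of Moore spaces is a Moore space. The only cosmetic difference is that the paper phrases the fixed-point step via $\Hom_H(H/L,\,t_U(cX))\cong \Hom_K(U^{op}\times_H (H/L),\,cX)$ and a standard biset decomposition, whereas you argue directly that $H'$-invariance means constancy on left $H'$-orbits; these are the same computation.
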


\begin{proof}  We need to show that for every $L\leq H$, the fixed point subspace $|\jn _U X | ^L=|(\jn _U X )^L|$ is a Moore space over $R$. We have 
\begin{equation*} 
\begin{split} 
(\jn _U X) ^L = \Hom _H (H/L,\ t_U (cX) )-\{ f_0\} = \Hom _K ( U ^{op} \times _H (H/L),\ cX)-\{ f_0 \}. 
\end{split}
\end{equation*}
By \cite[Lemma 11.2.26]{Bouc-BisetBook}, we have $$U ^{op} \times _H (H/L)\cong \coprod_{u \in L \backslash U/K } K/ L^u$$ where $L^u$ is the subgroup of $K$ defined by $L^u=\{ k \in K \colon uk=lu \ \text{for some}\ l\in L \}$.
Using this we obtain
\begin{equation*}
\begin{split} 
(\jn _U X) ^L  & = \Hom _K (
\coprod _{u \in L \backslash U/K } K/ L^u ,\ cX)- \{ f_0 \} =(\prod _{u \in L \backslash U /K } (cX)^{L ^u} )-\{(0_X, \dots, 0_X)\} 
\end{split}
\end{equation*}
Applying Proposition \ref{pro:Realization}, we conclude  
\begin{equation}\label{eqn:FPFormula} 
|\jn _U X|^L = \ast _{u\in L \backslash U /K} |X|^{L^u }.
\end{equation}
Since the join of a collection of Moore spaces is a Moore space,  $|\jn _U X| ^L $ is a Moore space over $R$. 
\end{proof}

Let $U$ be an $(H,K)$-biset. If $f : X \to X'$ is a $K$-poset map that induces a $K$-homotopy equivalence $|f|: |X|\to |X'|$, then $|\jn _U f|: |\jn _U X|\to |\jn _U X' | $ is an $H$-homotopy equivalence. To see this, observe that for every $L \leq H$ the induced map $$|\jn_U f|^L: |\jn_U X|^L  \to |\jn_U X'|^L$$ is a homotopy equivalence by the fixed point formula given in Equation \ref{eqn:FPFormula}. Hence by Whitehead's theorem for $G$-complexes, $|\jn_U f|$ is a $G$-homotopy equivalence. This proves that $\jn_U X$ defines a well-defined map on the equivalence classes of Moore $K$-spaces.

Given two $K$-posets $X$ and $Y$, we have $$\jn_U (X\ast Y)= t_U (c(X\ast Y) )-\{ f_0\} = t_U (cX\times cY ) -\{ f_0\} =\jn _U (X) \ast \jn _U (Y).$$  Hence we conclude that $\jn _U$ induces a well-defined group homomorphism $$\cM(U) \colon \cM(K) \to \cM (H)$$ for every $(H,K)$-set $U$. We show below that this operation satisfies all the necessary conditions for defining a biset functor.

\begin{proposition}\label{pro:BisetM} There exists a biset functor $\cM$ over $p$-groups such that for any $p$-group $G$, $\cM(G)$ is the group of finite Moore $G$-spaces, and for any $(H,K)$-biset $U$, $$\cM(U) :\cM(K)\to \cM (H)$$ is the group homomorphism induced by the generalized join induction $\jn _U$.
\end{proposition}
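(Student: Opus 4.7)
The plan is to verify the standard biset functor axioms for the assignment $G \mapsto \cM(G)$ equipped with the action $\cM(U)([X]) := [|\jn_U X|]$. Four things need checking: (a) for each $(H,K)$-biset $U$, the map $\cM(U)\colon \cM(K) \to \cM(H)$ is a well-defined group homomorphism; (b) $\cM$ is additive in $U$, so that it extends linearly to $\bbZ$-linear combinations of bisets; (c) composition is respected, i.e.\ $\cM(U \times_K V) = \cM(U) \circ \cM(V)$; and (d) the identity biset ${}_K K_K$ acts as the identity.

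Part (a) is essentially already in place from the discussion preceding the statement: the fixed point formula $|\jn_U X|^L \cong *_{u \in L\backslash U/K}|X|^{L^u}$ from Proposition~\ref{pro:Justifies}, combined with the equivariant Whitehead theorem, shows that $\jn_U$ preserves $H$-homotopy equivalences, while the identity $\jn_U(X \ast Y) \cong \jn_U X \ast \jn_U Y$ follows immediately from the product decomposition $t_U(cX \times cY) = t_U(cX) \times t_U(cY)$ and the definition of the join of $G$-posets. For (b), restriction along the inclusions $U, U' \hookrightarrow U \sqcup U'$ gives $t_{U \sqcup U'}(cX) \cong t_U(cX) \times t_{U'}(cX)$ as $H$-posets; removing the common basepoint $f_0$ yields $\jn_{U \sqcup U'} X \cong \jn_U X \ast \jn_{U'} X$, hence $\cM(U \sqcup U') = \cM(U) + \cM(U')$ in $\cM(H)$.

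The main step is (c). Given $U = {}_H U_K$, $V = {}_K V_L$ and an $L$-poset $X$, I would construct an explicit $H$-poset isomorphism
\[
\Phi\colon t_{U \times_K V}(cX) \longrightarrow t_U\bigl(t_V(cX)\bigr), \qquad F \longmapsto \bigl(u \mapsto (v \mapsto F(u,v))\bigr),
\]
coming from the standard tensor--hom adjunction. Checking that $\Phi$ is well-defined on the balanced product, is $H$-equivariant, and is an order isomorphism is routine verification with the biset conventions. Under the natural identification $c(\jn_V X) \cong t_V(cX)$ that sends the cone point to the constant function $f_0^V$, the basepoint of $t_{U \times_K V}(cX)$ corresponds under $\Phi$ to the basepoint in $t_U(c(\jn_V X))$; removing these from each side and passing to geometric realizations gives $|\jn_U(\jn_V X)| \cong |\jn_{U \times_K V} X|$ as $H$-spaces (invoking Proposition~\ref{pro:Realization} to commute realization with the join). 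Part (d) is easy: evaluation at $1 \in K$ gives a $K$-poset isomorphism $t_K(cX) \cong cX$ sending $f_0$ to the cone point, so $\jn_K X \cong X$.

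The main obstacle is the bookkeeping in step (c): keeping straight the left/right actions on $U^{op}$ and $V^{op}$ and the $K$-balancing in $U \times_K V$ so that the candidate map $\Phi$ is genuinely $H$-equivariant and order-preserving, and matching the basepoints on both sides so that removing them produces the join induction and not merely its cone. Once this is handled carefully, additivity, composition and identity all fall out formally, and we obtain the desired biset functor $\cM$.
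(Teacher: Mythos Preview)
Your proposal is correct and follows essentially the same approach as the paper. The paper's proof is terser: it cites \cite[Proposition 11.2.20]{Bouc-BisetBook} for the identity $t_V(t_U(cX)) = t_{V\times_H U}(cX)$ rather than writing out the currying isomorphism $\Phi$, uses the identification $c(\jn_U X)\cong t_U(cX)$ implicitly in the chain of equalities, and omits the identity axiom (d) entirely; your version makes all of these steps explicit but the underlying argument is the same.
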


\begin{proof} Let $U$ be a $(H,K)$-biset and $V$ be an $(L, H)$-biset. For every $K$-poset $X$,  $$\jn _V (\jn _U X)= t_V (c(\jn _U X ))-\{ f_{0} \} = t_V (t_U (cX) ) -\{f _{0} \}.$$ Since $t_V (t_U (cX))=t_{V \times _H  U} (cX)$ (see \cite[Proposition 11.2.20]{Bouc-BisetBook}), we conclude that
$$\jn _V (\jn _U X)=\jn _{V \times _H U} X.$$ 
By a similar argument, we see that if $U$ and $U'$ are two $(H,K)$-bisets, then $\jn _{U\amalg U'} X =\jn _U X \ast \jn _{U'} X$ for every Moore $K$-space $X$. Hence $\cM(U) : \cM (K) \to \cM(H)$ defines a biset functor $\cM(-)$ on any collection $\cC$ of $p$-subgroups that is closed under conjugations and taking subquotients. 
\end{proof}

Now we will show that the dimension homomorphism $\Dim : \cM (G) \to C(G)$ is a natural transformation of biset functors. Note that the biset action on the group of super class functions is defined as the dual of the biset action on the Burnside ring. Let $B(G)$ denote the Burnside ring of the group $G$ and $B^* (G)=\Hom (B(G), \bbZ)$. We can identify $C(G)$ with $B^* (G)$ by sending $f$ to a homomorphism that takes the transitive $G$-set $G/H$ to $f(H)$. Under this identification we often write $f(G/H)$ for $f(H)$ when we want to think of $f$ as an element of $B^*(G)$.

For a $(H,K)$-biset $U$, the action of $U$ on $B^*(G)$ is defined as the dual of the $U$-action on $B(G)$. In particular, for $f \in B^* (K)$, we define the generalized induction $\Jnd _U f$ as the super class function that satisfies 
\begin{equation}\label{eqn:JndFormula} 
(\Jnd_U f) (H/L)= f(U^{op} \times _H (H/L))=\sum  _{u\in L \backslash U /K} f(K/L^u)
\end{equation} 
for every $L \leq H$ (see \cite[Page 7]{Bouc-Yalcin} for more details).

\begin{proposition}\label{pro:DimCommutes} Let $U$ be a $(H,K)$-biset and let $X$ be a Moore $K$-space. Then $\jn _U X$ is a Moore $H$-space with dimension function $\Dim (\jn _U X)=\Jnd _U (\Dim X).$  Hence the dimension homomorphism $\Dim : \cM(-) \to C(-)$ is a natural transformation of biset functors.
\end{proposition}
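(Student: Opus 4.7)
The plan is to reduce the naturality statement to the fixed-point formula \eqref{eqn:FPFormula} established in the proof of Proposition \ref{pro:Justifies}, together with the additivity of $\Dim$ under joins. First, Proposition \ref{pro:Justifies} already tells us that $\jn_U X$ is a Moore $H$-space, so only the dimension formula and the biset-functor naturality need to be verified.

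Next, I would fix a subgroup $L\leq H$ and compute $(\Dim \jn_U X)(L)$ directly. By \eqref{eqn:FPFormula},
\[ |\jn_U X|^L \;=\; \ast_{u\in L\backslash U/K} |X|^{L^u}. \]
Lemma \ref{lem:DimOfJoin} gives $\Dim(Y*Y') = \Dim Y + \Dim Y'$ for any two Moore $G$-spaces, which iterates immediately to give additivity over finite joins; applied at $L$ this yields
\[ (\Dim \jn_U X)(L) \;=\; \sum_{u\in L\backslash U/K} (\Dim X)(L^u). \]
Comparing this with the formula \eqref{eqn:JndFormula} for the generalized induction of super class functions,
\[ (\Jnd_U f)(H/L) \;=\; \sum_{u\in L\backslash U/K} f(K/L^u), \]
gives $\Dim(\jn_U X)=\Jnd_U(\Dim X)$, as required. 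Since this identity is $R$-linear in the input and holds on isomorphism classes (the formula depends only on $\un{n}$), it extends to the virtual classes in $\cM(K)$.

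Finally, for the naturality statement, Proposition \ref{pro:BisetM} already provides the biset-functor structure on $\cM(-)$ via generalized join induction, and the biset action on $C(-)=B^*(-)$ is the standard dual action recalled just before Proposition \ref{pro:DimCommutes}. The computation above verifies the compatibility square $\Dim\circ\cM(U) = \Jnd_U\circ\Dim$ for an arbitrary $(H,K)$-biset $U$, so $\Dim$ is a morphism of biset functors. The only bookkeeping point to watch is the $+1$ shift between the homological function $\un{n}$ and the dimension function $\Dim$, and the boundary case in which some double coset $L\backslash U/K$ is empty (in which case both sides of the identity vanish, with the convention that an empty join is the empty space of dimension $0$); neither presents a real obstacle.
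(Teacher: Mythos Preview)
Your proof is correct and follows essentially the same route as the paper: both arguments fix $L\leq H$, invoke the fixed-point formula \eqref{eqn:FPFormula} for $|\jn_U X|^L$, and compare with the defining formula \eqref{eqn:JndFormula} for $\Jnd_U$. The paper compresses this into a single displayed equation, while you spell out the intermediate appeal to Lemma~\ref{lem:DimOfJoin} for additivity of $\Dim$ under joins and add the (correct but inessential) remarks on extension to virtual classes and on the empty case.
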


\begin{proof} Let $L \leq H$. Using the formulas in Equations \ref{eqn:FPFormula} and \ref{eqn:JndFormula}, we obtain
\begin{equation*}
[\Dim (\jn _U X)] (H/L)=\sum  _{u\in L \backslash U /K} (\Dim X) (K/L^u) =[\Jnd _U (\Dim X )](H/L).
\end{equation*}
\end{proof}

Now we consider the homomorphism $\Hn: \cM(G)\to D^{\Omega} (G)$. This homomorphism also defines a natural transformation of biset functors, but for this we first need to explain the biset functor structure on $D^{\Omega} (-)$. Again it is relatively easy to define the action of an $(H,K)$-biset on an endo-permutation $kK$-module $M$ when the biset $U$ is a restriction, inflation, isogation, or deflation biset. The action of the induction biset is harder to define and it is done using tensor induction of $kG$-modules. To define tensor induction operation on $D^{\Omega} (G)$ one needs to prove that tensor induction of a relative syzygy is generated by relative syzygies. To prove this, Bouc \cite{Bouc-Tensor} proved the tensor induction formula stated below as Theorem \ref{thm:TensorInd}. We will first give a topological proof for this formula.

Let $K$ be a subgroup of $H$, and let $\{h_1 K, \dots, h_s K\}$ be a set of coset representatives for cosets of $K$ in $H$. For each $h\in H$, there is a permutation $\pi$ of the set $\{1, \dots, s\}$ such that for every $i\in \{ 1, \dots, s\}$, $$hh_i =h_{\pi(i) } k_i$$ for some $k_i \in K$. The tensor induction $\Ten _K ^H M$ is defined as the $kH$-module which is equal to the tensor product $M\otimes_k \cdots \otimes_k M$ ($s$ times) as a $k$-vector space, and the $H$-action is defined by $$h (m_1\otimes \cdots \otimes m_s) =k_{\pi ^{-1} (1)} m_{\pi^{-1} (1)} \otimes \cdots \otimes k_{\pi ^{-1} (s)}  m_{\pi ^{-1} (s)} $$
for every $h\in H$. The tensor induction distributes over tensor products $$\Ten _K ^H (M_1 \otimes M_2)=\Ten _K ^H M_1 \otimes \Ten _K ^H M_2$$ and there is a Mackey formula for tensor induction similar to the Mackey formula for additive induction (see \cite[Proposition 3.15.2]{Benson-Book1}).  

\begin{proposition}\label{pro:HomologyJoinInd} Let $K \leq H$ be $p$-groups, and let $X$ be a Moore $K$-space over $k$ with nonzero reduced homology at dimension $n$. Then the reduced homology of $\jn _K ^H  X$ is isomorphic to $\Ten_K ^H (\widetilde H_n (X; k))$. \end{proposition}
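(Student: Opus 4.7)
The plan is to explicitly identify $|\jn_K^H X|$ as an $H$-space with the $s$-fold topological join of $|X|$ (where $s=[H:K]$), carrying an $H$-action that permutes the join coordinates via the action of $H$ on $H/K$ in the same way that tensor induction permutes tensor factors. Once this identification is in place, the statement follows from iterating the Künneth formula for joins.

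To set this up, fix left coset representatives $h_1,\dots,h_s$ of $K$ in $H$. Since a $K$-equivariant map $f\colon U^{op}\to cX$ (with $U={}_HH_K$) is determined by the tuple $(f(h_1),\dots,f(h_s))\in (cX)^s$, there is a poset isomorphism
\[
\jn_K^H X \;=\; t_U(cX)\setminus\{f_0\} \;\cong\; (cX)^s \setminus\{(0_X,\dots,0_X)\}.
\]
Writing $hh_i = h_{\pi(i)} k_i$ as in the definition of $\Ten_K^H$ and translating $(hf)(u)=f(h^{-1}u)$ through the tuple identification gives, after a direct calculation,
\[
h\cdot(x_1,\dots,x_s) \;=\; \bigl(k_{\pi^{-1}(1)}\,x_{\pi^{-1}(1)},\,\dots,\,k_{\pi^{-1}(s)}\,x_{\pi^{-1}(s)}\bigr),
\]
which is exactly the formula defining tensor induction on tensors. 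Applying Proposition \ref{pro:Realization} upgrades this to an $H$-homeomorphism $|\jn_K^H X|\cong |X|^{*s}$ with the permutation-twisted action above.

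With this identification, the homology computation is standard: over the field $k$, iterating the Künneth formula $\widetilde H_{i+j+1}(A*B;k)\cong \widetilde H_i(A;k)\otimes_k \widetilde H_j(B;k)$ shows that the reduced homology of $|X|^{*s}$ is concentrated in degree $s(n+1)-1$ and $k$-isomorphic to $M^{\otimes s}$, where $M=\widetilde H_n(X;k)$. Transporting the $H$-action of the previous step through this isomorphism yields precisely the $kH$-module $\Ten_K^H M$, as required.

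The main technical point I expect to be subtle is verifying that the iterated Künneth isomorphism is genuinely $H$-equivariant rather than equivariant only up to signs: the Eilenberg--Zilber shuffle maps introduce Koszul signs when transposing tensor factors, whereas the defining formula of $\Ten_K^H$ carries no signs. Since every factor shares the common homological degree $n$, each transposition contributes the same sign $(-1)^{(n+1)^2}$ arising from the join's suspension shift, and these signs assemble into a single one-dimensional character of $H$ factoring through the permutation action on $H/K$; this character can be absorbed by a uniform rescaling of the Künneth identification on each slot, and in characteristic $2$ it disappears outright. After this normalization the resulting $kH$-module is canonically isomorphic to $\Ten_K^H M$.
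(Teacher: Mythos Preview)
Your approach is essentially the same as the paper's: identify $|\jn_K^H X|$ with the $s$-fold join of $|X|$ carrying the tensor-induction-style $H$-action, then compute homology via K\"unneth. The paper does this at the chain level, identifying the reduced simplicial chain complex of $\jn_K^H X$ with $\Ten_K^H \widetilde C_*(X;k)$ (with the Koszul signs built into the definition of tensor induction of complexes), and then passing to homology via a chain map from the tight truncation.

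The one genuine gap is your resolution of the sign issue. You correctly observe that the Koszul signs assemble into a one-dimensional character of $H$, namely $h\mapsto \operatorname{sgn}(\pi_h)^{n+1}$, but the claim that this ``can be absorbed by a uniform rescaling of the K\"unneth identification on each slot'' is not correct: a nontrivial character cannot be removed by rescaling, and one would only obtain $\Ten_K^H M \otimes_k \chi$ rather than $\Ten_K^H M$. Indeed, the example immediately following the proposition in the paper (with $H=C_2$, $K=1$, $X=S^0$ over $\bbZ$) exhibits exactly this sign twist. The correct argument, which the paper gives, uses the hypothesis that $H$ is a $p$-group and $\operatorname{char} k = p$: if $p=2$ then $-1=1$ in $k$; if $p$ is odd then every element of $H$ has odd order and hence acts on $H/K$ by an even permutation, so $\operatorname{sgn}(\pi_h)=1$ for all $h$. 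In either case the character is trivial and the desired isomorphism with $\Ten_K^H M$ follows.
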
 

\begin{proof} As before we can assume that the Moore space $X$ is a realization of a $G$-poset $X$ and that join induction is defined by $\jn _K ^H X=\Hom _K (H, cX)-\{ f_0 \}$. Let $\{ h_1K, \dots, h_sK\}$ be a set of left coset representatives of $K$ in $H$. We can consider the $H$-poset $\Hom _K (H, cX)$ as a product of $K$-posets $\prod _{i=1} ^s cX$ with the $H$-action given by $$h(x_1,\dots, x_s)=(k_{\pi ^{-1} (1)} x_{\pi^{-1} (1)}, \dots, k_{\pi ^{-1} (s)} x_{\pi ^{-1} (s)} )$$ for every $h \in H$.  The permutation $\pi$ and the elements $k_1, \dots, k_s \in K$ are defined as in the definition of tensor induction. As in the proof of Proposition \ref{pro:Realization}, we can identify the realization of this poset with the geometric join of $s$ copies of the realization of $X$. This gives a simplicial complex, also denoted $\jn _K ^H X$, where the simplices of $\jn _K ^H X$ are unions of simplices of the flag complex $\Flag(X)$. Hence a simplex of $\jn _K ^H X$ is of the form $\sigma =\cup _{i=1} ^s \sigma _i$ where  $\sigma _i $ is a simplex of $\Flag (X)$ or an empty set, for every $i\in \{1, \dots, s\}$. The $H$-action on $\sigma$ is given by $$h\sigma=\bigcup _{i=1} ^s k_{\pi^{-1} (i) } \sigma _{\pi ^{-1}(i)}$$
for every $h \in H$. Note that the $H$-action on $\jn_K ^H X$ is not admissible, i.e., an element $h \in H$ can take a simplex to itself without fixing it pointwise. We can take the barycentric subdivision of $\jn _K ^H X$ to obtain an admissible complex. Since an admissible $H$-complex has a natural $H$-CW-complex structure, this will give an $H$-CW-complex structure on $\jn_K ^H X$.
 
 We can calculate the homology of $\jn_K ^H X$ using the cellular homology and the $H$-CW-complex structure that we just described. But it is not easy to find an $H$-basis for the cellular chain complex of this $H$-CW-complex. Instead we will use the chain complex of the simplicial complex $\jn_K^H X$ before taking the barycentric subdivision. This gives us a signed permutation complex instead of a permutation complex, but it is easier to give a basis for $H$-orbits of cells. Note that if $\widetilde C (X; k)$ denotes the reduced (simplicial) chain complex of the flag complex $\Flag (X)$, then the chain complex for $\jn_K ^H X$ will be the complex
 $\Ten _K ^H \widetilde C_* (X; k)$. The tensor induction of chain complexes is defined in a similar way to the tensor induction of modules, except that there is a twisting coming from the sign convention for chain complexes. The action of $h\in H$ on $\Ten _K ^H \widetilde C (X; k)$ is defined by $$h (\sigma_1 \otimes \cdots \otimes \sigma _s)=(-1) ^{\nu} k_{\pi^{-1} (1) } \sigma _{\pi^{-1} (1)} \otimes \cdots \otimes k_{\pi^{-1} (s) } \sigma _{\pi ^{-1} (s)}$$
 where $$\nu=\sum _{\substack{i<j\\ \pi(i) >\pi (j) }} \deg(\sigma _i) \deg(\sigma_j).$$
More details about this formula can be found in \cite[Sect. 4.1]{Benson-Book2}. In the chain complex $C_* (X; k)$ the degree of a simplex $\sigma: x_0<\cdots < x_n$ is $n$, but in the above formula we take the reduced complex $\widetilde C (X; k)$ as a chain complex starting from degree zero, i.e., as a complex shifted by one degree. In other words in the above formula, the degree of $\sigma$ is taken as $n+1$, i.e., as the number of vertices in $\sigma$. This is consistent with the definition of the orientation of the simplex $\sigma=\cup _i \sigma_i$.  

To complete the proof note that we can assume that the complex $C_*=\widetilde C_* (X; k)$ is a tight complex. If not, we can replace it with a tight complex up to chain homotopy. Note that tensor induction of chain homotopic complexes are chain homotopic (see \cite[Lemma 4.1.1]{Benson-Book2}). Therefore we can assume $C_i=0$ for $i>n$, hence there is $K$-map $H_n (X; k) \to C_n (X; k)$. We can consider this map as a chain map  $H_n (X;k) \to C_* $ from a chain complex concentrated at degree $n+1$. This gives an $H$-map
$$ \Ten _K ^H H_n (X; k) \otimes_k k^{(n+1)} \to \Ten _K ^H C_*$$
that induces an isomorphism on homology by the K\" unneth theorem. The module $k^{(n+1)}$ is a one dimensional module which is trivial if $n+1$ is even, and it is the sign representation if $n+1$ is odd. Since either the characteristic of $k$ is even or $H$ is a  $p$-group for an odd prime $p$, in our case the module $k^{(n+1)}$ is isomorphic to $k$. So, we obtain the desired isomorphism.  
\end{proof}

\begin{example} Over a ring $R$ the conclusion of Proposition \ref{pro:HomologyJoinInd} is only true up to a twist coming from sign representation. To see this consider the following situation. Let $H=C_2$, $K=1$, and $X=S^0$ be the zero dimensional sphere. The realization of the poset $\Hom _K (H, cX)-\{ f_0 \}$ can be pictured as the barycentric subdivision of a square, with corners $(\pm 1, \pm 1)$ in $\bbR ^2$. The $H$-action is defined by reflection $(x,y)\to (y,x)$. As a simplicial complex, this complex has 8 vertices and 8 edges. As we did in the proof of Proposition \ref{pro:HomologyJoinInd}, we can consider the geometric join, which will be the square inside the original square with corners given by $(\pm 1,0)$ and $(0, \pm 1)$. Now as a simplicial complex this complex has 4 vertices and 4 edges and some of the edges are taken to themselves with reverse orientation by the $H$-action. For example the edge between $(0,1)$ and $(1,0)$ is taken to itself. So the $H$-action is not admissable. The reduced chain complex for this simplicial complex is of the form
$$ 0 \to \bbZ[H/1] \otimes (\oplus _2 \widetilde\bbZ ) \to \oplus _2 \bbZ [H/1] \to \bbZ \to 0$$ where $\widetilde \bbZ$ denotes the integers with $-1$ action of $C_2$.  Note that this complex is the tensor induction of the chain complex $\widetilde C_* (S^0)$ which is a chain complex of the form $$0 \to \oplus _2 \bbZ \to \bbZ \to 0.$$ Here we consider the generators of the module $\oplus _2 \bbZ$ as degree one chains.
Note that the reduced homology of the complex $S^0$ is the trivial module $\bbZ$, but the reduced homology of the $\jn _1 ^H S^0 \cong S^1$ is $\widetilde \bbZ$ since the $H=C_2$ action on the circle is given by reflection with respect to a line, not the antipodal action. 
\end{example}

Using Proposition \ref{pro:HomologyJoinInd}, we can give a topological proof for Bouc's tensor induction formula.

\begin{theorem}[Theorem 12.6.6 in \cite{Bouc-BisetBook}]\label{thm:TensorInd} Let $H$ be a $p$-group and $K$ be a subgroup of $H$, and let $X$ be a $K$-set. Then in $D(G)$,
$$\Ten _K ^H \Omega _X= \sum _{S,T\leq_H H,\ S\leq _H T} \mu _H (S,T)\, |\{ h\in T\backslash H / K \colon X^{T^h \cap K} \neq \emptyset \}|\, \Omega _{H/S} $$ where the $S\leq _H T$ means $S,T$ are taken from the poset of conjugacy classes of subgroups of $H$ and $\mu_H$ denotes the M\" obius function for this poset.
\end{theorem}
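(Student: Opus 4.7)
The plan is to derive the formula as a direct consequence of the biset-functorial identity $\Hn=\Psi\circ\Dim$ combined with the homological calculation in Proposition \ref{pro:HomologyJoinInd}. View the finite $K$-set $X$ as a discrete $K$-CW-complex: this is a Moore $K$-space over $k$ with $\widetilde H_0(X;k)=\Delta(X)$, dimension function $\Dim X=\omega_X$, and, when $|X^K|\neq 1$, with $[X]\in \cM(K)$ and $\Hn([X])=\Omega_X$.

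Assuming first that $|X^K|\neq 1$, I would apply join induction. By Proposition \ref{pro:HomologyJoinInd} the reduced homology of $\jn_K^H X$ is isomorphic to $\Ten_K^H \Delta(X)$, so $\Hn([\jn_K^H X])=\Ten_K^H \Omega_X$ in $D^{\Omega}(H)$. On the other hand, by Proposition \ref{pro:DependsOnly} this equals $\Psi(\Dim(\jn_K^H X))$, and by Proposition \ref{pro:DimCommutes} the latter equals $\Psi(\Jnd_K^H \omega_X)$. The formula in Equation (\ref{eqn:JndFormula}) gives
$$(\Jnd_K^H \omega_X)(T)=\sum_{h\in T\backslash H/K}\omega_X(T^h\cap K)=|\{h\in T\backslash H/K : X^{T^h\cap K}\neq\emptyset\}|.$$
Writing this super class function in the idempotent basis $\{e_T\}$ of $C(H)$ and rewriting each $e_T=\sum_{S\leq_H T}\mu_H(S,T)\omega_{H/S}$ via Lemma \ref{lem:basis}, and then applying $\Psi$ using $\Psi(\omega_{H/S})=\Omega_{H/S}$, yields exactly the claimed identity.

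The main obstacle is the degenerate case $|X^K|=1$, in which $X$ is not a capped Moore $K$-space and $\Omega_X=0$, so the left-hand side vanishes. Here $\omega_X$ coincides with $\omega_{K/K}$ on $\Sub(K)$, so $\Jnd_K^H\omega_X=\Jnd_K^H\omega_{K/K}$ equals the dimension function of the $H$-contractible space $\jn_K^H(K/K)$. Since this virtual class has trivial reduced homology, the identity $\Hn=\Psi\circ\Dim$ forces $\Jnd_K^H\omega_X\in\ker\Psi$, so the right-hand side vanishes as well. Alternatively, one may reduce to the capped case by replacing $X$ by $X\sqcup X$ (which has $|(X\sqcup X)^K|=2$), running the main argument for the disjoint union, and using Lemma \ref{lem:Secondlemma} to transfer the resulting formula back to $X$.
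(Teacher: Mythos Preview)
Your proof is essentially the paper's own argument: both compute $\Ten_K^H\Omega_X=\Hn(\jn_K^H X)=\Psi(\Dim(\jn_K^H X))=\Psi(\Jnd_K^H\omega_X)$ via Propositions \ref{pro:HomologyJoinInd}, \ref{pro:DependsOnly}, and \ref{pro:DimCommutes}, and then expand $\Jnd_K^H\omega_X$ in the basis $\{\omega_{H/S}\}$ using Lemma \ref{lem:basis}. You are in fact more careful than the paper in isolating the degenerate case $|X^K|=1$, where $X$ is not capped and so $[X]\notin\cM(K)$; the paper glosses over this. One small remark: your first argument for that case invokes $\Hn=\Psi\circ\Dim$ on $\jn_K^H(K/K)$, but this space is not capped either (its $H$-fixed set is a point), so it does not lie in $\cM(H)$; your alternative argument via $X\sqcup X$ and Lemma \ref{lem:Secondlemma} is the clean fix and works exactly as you describe.
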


\begin{proof} Let $X$ be a $K$-set. Considering $X$ as a finite Moore $K$-space, we have $\Hn ([X])=\Omega _X$. By Proposition \ref{pro:HomologyJoinInd}, $\Ten _K ^H \Omega _X$ is equal to the equivalence class of the reduced homology module of $\jn _K ^H X$. By Proposition \ref{pro:Justifies} and Theorem \ref{thm:main}, this class is generated by relative syzygies so it belongs to $D^{\Omega} (H)$. This proves that tensor induction $\Ten _K ^H$ defines an operation $D^{\Omega} (K) \to D^{\Omega} (H) $. 

To verify the explicit formula given in the theorem, let us define $$F(X,T):=|\{ h\in T\backslash H / K \colon X^{T^h \cap K} \neq \emptyset \}|.$$ Since $$\Ten _K ^H \Omega _X = \Hn (\jn _K ^H X)=\Psi ( \Dim (\jn _K ^H X))= \Psi (\Jnd _K ^H \omega _X),$$ we need to show that  $$\Jnd _K ^H \omega _X=\sum _{S,T\leq_H H,\ S\leq _H T} \mu _H (S,T) F(X,T) \omega _{H/S}.$$
For every $T \leq H$, we have
\begin{equation}\begin{split} (\Jnd _K ^H \omega _X)(H/T) & = \omega _X (\Res ^H _K (H/T)) = \omega _X \Bigl (\sum _{h\in K\backslash H/T} K/(K\cap {}^h T ) \Bigr ) =F(X,T).
\end{split}
\end{equation}
Hence,
\begin{equation}
\begin{split} 
\Jnd_K ^H \omega _X & =\sum _{T \leq_H  H} (\Jnd _K ^H \omega _X )(H/T) e_T = \sum _{T \leq _H H } F(X,T) \sum _{S\leq _H T} \mu _H (S,T) \omega _{H/S}.\\
\end{split}
\end{equation}
gives the desired equality.
\end{proof}

We conclude the paper with the following result.

\begin{proposition} The commuting diagram in Theorem \ref{thm:mainSeq} is a diagram of biset functors. 
\end{proposition}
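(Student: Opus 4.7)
The plan is to observe that most of the heavy lifting is already in place: $\cM(-)$ carries a biset functor structure by Proposition \ref{pro:BisetM}; $C(-)$ and its subgroup of Borel--Smith functions $C_b(-)$ are standard biset functors (the former dual to the Burnside ring); and $D^{\Omega}(-)$ inherits a biset functor structure from $D(-)$, with the tensor induction action on relative syzygies being precisely the content of Theorem \ref{thm:TensorInd}. Moreover $\Dim : \cM(-) \to C(-)$ is natural by Proposition \ref{pro:DimCommutes}, and $\Psi : C(-) \to D^{\Omega}(-)$ is natural by Bouc's tensor induction formula (reproved here as Theorem \ref{thm:TensorInd}). Thus the only remaining content is to verify that $\Hn : \cM(-) \to D^{\Omega}(-)$ is a natural transformation of biset functors; everything else will follow formally.

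To check naturality of $\Hn$, I would reduce to the five elementary types of bisets (restriction, isogation, inflation, deflation, induction). For restriction, isogation, and inflation the biset action on a Moore $G$-space amounts to a change of group action on the same underlying complex, and taking reduced homology manifestly intertwines this with the corresponding operation on endo-permutation modules. For deflation along a normal subgroup $N \trianglelefteq G$, the Moore $G$-space $X$ is sent to the $G/N$-space $X^N$, whose reduced homology in degree $\un{n}(N)$ is by definition the deflation of $\widetilde H_n(X;k)$, again compatibly. The only nontrivial case is the induction biset $U = {}_H H _K$, whose action is join induction $\jn_K^H$; here Proposition \ref{pro:HomologyJoinInd} gives exactly
$$\Hn(\jn_K^H [X]) \;=\; [\,\Ten_K^H \widetilde H_n(X;k)\,] \;=\; \Ten_K^H \Hn([X]),$$
which is naturality with respect to induction bisets. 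Since every $(H,K)$-biset decomposes as a composition of these five elementary bisets (\cite[Lemma 2.3.26]{Bouc-BisetBook}), naturality for arbitrary bisets follows.

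Once $\Hn$ is known to be natural, the identity $\Hn = \Psi \circ \Dim$ of Proposition \ref{pro:DependsOnly} becomes an equality of natural transformations, so the right-hand square commutes in the category of biset functors. Consequently $\cM_0(-) := \ker \Hn$ is a subfunctor of $\cM(-)$, the inclusion $\cM_0(-) \hookrightarrow \cM(-)$ is natural, and $\Dim$ restricts to a natural transformation $\Dim_0 : \cM_0(-) \to \ker \Psi = C_b(-)$, which together with the natural inclusion $C_b(-) \hookrightarrow C(-)$ makes the left-hand square commute through morphisms of biset functors. The identity map on $D^{\Omega}(-)$ on the right is trivially natural. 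Thus every arrow in the diagram is a morphism of biset functors.

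The single step where genuine work has to be done is naturality of $\Hn$ with respect to the induction biset, and this is exactly the content of Proposition \ref{pro:HomologyJoinInd}, whose proof already identifies the $H$-simplicial structure on $\jn_K^H X$ with (up to a sign representation that disappears in characteristic $p$) the tensor induction of the reduced chain complex of $X$. With that proposition in hand, the verification above is essentially bookkeeping over the five elementary biset types.
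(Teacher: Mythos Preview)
Your proposal is correct and follows essentially the same route as the paper: naturality of $\Dim$ via Proposition \ref{pro:DimCommutes}, naturality of $\Hn$ by checking the five elementary bisets with Proposition \ref{pro:HomologyJoinInd} handling induction, and then formal bookkeeping for the rest of the diagram.

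There is one small organizational difference worth noting. You invoke the naturality of $\Psi$ as an independent input (citing Theorem \ref{thm:TensorInd}), whereas the paper \emph{derives} it inside the proof from the naturality of $\Hn$ and $\Dim$ already established: for any $(H,K)$-biset $U$ and any $K$-set $X$,
\[
\Psi(\Jnd_U \omega_X) = \Psi(\Dim(\jn_U X)) = \Hn(\jn_U X) = T_U(\Hn X) = T_U(\Psi \omega_X),
\]
using $\Hn = \Psi \circ \Dim$ twice. Since the $\omega_X$ generate $C(K)$, this gives naturality of $\Psi$ for free without a separate appeal. Your route is not wrong---the proof of Theorem \ref{thm:TensorInd} in the paper contains exactly this chain---but the paper's packaging is slightly more self-contained and handles all bisets $U$ at once rather than singling out induction.
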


\begin{proof} We already proved in Proposition \ref{pro:DimCommutes} that $\Dim$ is a natural transformation of biset functors. By Proposition \ref{pro:HomologyJoinInd}, we know  that the homomorphism $\Hn$ commutes with induction. It is easy to check that $\Hn$ commutes with the restriction, inflation, deflation and isogation bisets (see \cite[Proposition 12.6.5]{Bouc-BisetBook}). So we can conclude that $\Hn$ is a natural transformation of biset functors. To prove that $\Psi$ is a natural transformation of biset functors, we need to show that for every $f \in C(K)$ and for every $(H,K)$-biset $U$, the equality $\Psi (\Jnd _U f)=T_U (\Psi f)$ holds. Since $C(K)$ is generated by $\omega _X$, it is enough to show this for $f=\omega _X$. We have
\begin{equation*}
\begin{split} 
\Psi (\Jnd _U \omega _X)&=\Psi (\Jnd _U (\Dim X))=\Psi (\Dim (\jn _U X)) = \Hn (\jn _U X) \\ &=T_U (\Hn (X)) =T_U (\Psi (\Dim X))=T_U (\Psi (\omega _X)).
\end{split}
\end{equation*}
Hence the proof is complete.
\end{proof}


\begin{thebibliography}{9}

\bibitem{Alperin}
J.~L.~Alperin, {\em A construction of endo-permutation modules}, J.
Group Theory {\bf 4} (2001), 3--10.

\bibitem{Benson-Book1}
D.~J.~Benson, \emph{Representations and cohomology I: Basic representation
theory of finite groups and associative algebras}, Cambridge studies in advanced mathematics, Cambridge University Press, 1995.  

\bibitem{Benson-Book2}
D.~J.~Benson, \emph{Representations and cohomology II: Cohomology of groups and modules}, Cambridge studies in advanced mathematics, Cambridge University Press, 1995. 

\bibitem{Benson-Habegger}
D.~J.~Benson and N.~Habegger, {\em Varieties for modules and a problem of Steenrod}, J. Pure Appl. Algebra {\bf 44} (1987), 13--34.

\bibitem{Bouc-Tensor}
S.~Bouc, {\em Tensor induction of relative syzygies},  J. Reine
Angew. Math.  {\bf 523}  (2000), 113--171.

\bibitem{Bouc-Aremark}
S.~Bouc, {\em A remark on the Dade group and the Burnside group}, J.
Algebra, {\bf 279} (2004) 180-190.

\bibitem{Bouc-BisetBook}
S.~Bouc, {\em Biset functors for finite groups}, Lecture Notes in Math., vol. 1990, Springer, 2010.

\bibitem{Bouc-Yalcin}
S.~Bouc and E.~Yal\c c\i n, {\em Borel-Smith functions and the Dade group}, J. Algebra {\bf 311} (2007), 821--839.

\bibitem{Bredon}
G.~E.~Bredon, {\em Equivariant cohomology theories}, Lecture Notes in Math., \textbf{34}, Springer-Verlag, 1967. 

\bibitem{Carlsson}
G.~Carlsson, {\em A counterexample to a conjecture of Steenrod}, Invent. Math. \textbf{64} (1981), 171--174.

\bibitem{Doman}
R.~Doman, {\em Non-$G$-equivalent Moore $G$-spaces of the same type}, Proc. Amer. Math. Soc. \textbf{103} (1988), 1317--1321.

\bibitem{Dotzel-Hamrick}
R.~M. Dotzel and G.~C. Hamrick, {\em {$p$}-group actions on homology spheres},
Invent. Math. \textbf{62} (1981), 437--442.
    
\bibitem{HPY}
I.~Hambleton, S.~Pamuk, and E.~Yal{\c{c}}{\i}n, {\em Equivariant
{CW}-complexes and the orbit category}, Comment. Math. Helv. \textbf{88}
(2013), 369--425. 

\bibitem{Hambleton-Yalcin-HomRep}
I.~Hambleton and E.~Yal{\c{c}}{\i}n, {\em Homotopy representations over the orbit category}, Homology Homotopy Appl. \textbf{16} (2014), 345--369.

\bibitem{Illman}
S.~Illman, {\em Restricting the transformation group in equivariant CW complexes}, Osaka J. Math. \textbf{27} (1990), 191--206.
    
\bibitem{Kahn}
P.~J.~Kahn, {\em Rational Moore $G$-Spaces}, Trans. Amer. Math. Soc. {\bf 298} (1986), 245--271.

\bibitem{Lueck}
W.~L{\"u}ck, {\em Transformation Groups and Algebraic {$K$}-Theory}, Lecture
Notes in Mathematics, vol. 1408, Springer-Verlag, Berlin, 1989, Mathematica Gottingensis.

\bibitem{Nucinkis}
B.~E.~A.~Nucinkis, {\em Cohomology relative to a $G$-set and finiteness conditions},
Topology Appl. \textbf{92} (1999), 153--171.
  
\bibitem{Oliver-Segev}
B.~Oliver and Y.~Segev, {\em Fixed point free actions on $\bbZ$-acyclic $2$-complexes}, Acta Math. \textbf{189} (2002), 203-285.

\bibitem{Quillen} 
D.~Quillen, {\em Homotopy properties of the poset of nontrivial $p$-subgroups of a group}, Adv. Math.\textbf{28} (1978), 101-128.
  
\bibitem{JustinSmith}
J.~Smith, {\em Equivariant Moore spaces}, Algebraic and Geometric Topology, Lecture Notes in Math. {\bf 1126} (1985), 238--270.

\bibitem{tDieck-BlueBook}
T.~tom Dieck, {\em Transformation groups}, De Gruyter Studies in
Mathematics 8, Walter de Gruyter, New York, 1987.

\bibitem{Walker}
J.~W.~Walker, {\em Canonical homeomorhisms of posets}, Europ. J. Combinatorics \textbf{9} (1988), 97--107.


\end{thebibliography}
\end{document}